\title{An implicit sweeping process approach to quasistatic evolution variational inequalities.}
\newtheorem{theorem}{Theorem}[section]
\newtheorem{corollary}[theorem]{Corollary}
\newtheorem{lemma}[theorem]{Lemma}
\newtheorem{proposition}[theorem]{Proposition}
\newtheorem{example}[theorem]{Example}
\newtheorem{remark}[theorem]{Remark}
\def\beq{\begin{equation}}
\def\eeq{\end{equation}}
\def\baq{\begin{eqnarray}}
\def\eaq{\end{eqnarray}}
\def\baqn{\begin{eqnarray*}}
\def\eaqn{\end{eqnarray*}}
\newcommand{\R}{\mathbb{R}}
\newcommand{\N}{\mathbb{N}}
\def\image #1 (#2,#3) (echelle #4) #5{
\dimen2=#2
\dimen3=#3
\divide \dimen2 by 1000
\multiply \dimen2 by #4
\divide \dimen3 by 1000
\multiply \dimen3 by #4
\setbox1 =\vbox to \dimen2{\hsize=\dimen3\vfill\special{picture #1
scaled #4}}
\vbox{\hsize=\dimen3\box1\medskip\centerline{#5}}
}
\begin{document}
\author{
Samir {\sc Adly}\thanks{XLIM UMR-CNRS 7252, Universit\'e de Limoges, 123, avenue Albert Thomas, 87060 Limoges CEDEX, France.
Email: samir.adly@unilim.fr}  \;and Tahar {\sc Haddad}\thanks{Laboratoire LMPEA, Universit\'{e} de Jijel, B.P. 98, Jijel 18000, Alg\'{e}rie.Email: haddadtr2000@yahoo.fr}}

\maketitle
\begin{abstract}\noindent In this paper, we study a new variant of Moreau's sweeping process with
velocity constraint. Based on an adapted version of Moreau's catching-up algorithm, we show the well-posedness (in the sense existence and
uniqueness) of this problem in a general framework. We show the equivalence between this implicit sweeping process and a quasistatic evolution variational inequality.
It is well known that the variational
formulations of many mechanical problems with unilateral contact and friction lead to an evolution
variational inequality. As an application, we reformulate the quasistatic antiplane frictional contact
problem for linear elastic materials with short memory as an implicit sweeping process with
velocity constraint. The link between the implicit sweeping process and the quasistatic
evolution variational inequality is possible thanks to some standard
tools from convex analysis and is new in the literature.
\end{abstract}

\noindent {\bf Keywords} Moreau's sweeping process, \and evolution variational inequalities, \and unilateral constraints, \and quasistatic frictional contact problems.\\
\noindent {\bf AMS subject classifications} 49J40, 47J20, 47J22,  34G25, 58E35, 74M15, 74M10, 74G25.
\setcounter{tocdepth}{1}
\tableofcontents

\section{Introduction}
The notion of the so-called sweeping process was introduced by Jean Jacques Moreau in the 1970s. Jean Jacques Moreau wrote more than 25 papers devoted to the treatment of both theoretical and numerical aspects  of the sweeping process as well as its applications in unilateral mechanics \cite{M,Moreau1,Moreau2,Moreau3,Moreau4}. It was first considered for modeling the quasistatic evolution of elastoplastic systems. The sweeping process consists of finding a trajectory $t\in [0,T]\mapsto u(t)\in C(t)$ satisfying the following generalized Cauchy problem
\begin{equation}\label{eq1.1}
 -\dot{u}(t) \in {\rm N}_{C(t)}( u(t))\quad\mbox{ a.e. on }\;[0,T],  u(0)=u_{0}\in C(0),
\end{equation}
where $C:[0,T]\rightrightarrows H$ is a set-valued mapping defined
from $[0,T] \; (T>0)$ to a Hilbert space $H$ with convex
and closed values, and ${\rm N}_{C(t)}( u(t))$ denotes the outward normal
cone, in the sense of convex analysis, to the set $C(t)$ at {the point}
$u(t).$
 Translating inclusion (\ref{eq1.1}) to a mechanical language, we obtain the following interpretation:\\
- If the position $u(t)$ of a particule lies in the interior of the moving set $C(t)$, then {the normal cone is reduced to the singleton $\{0\}$ and hence }$\dot{u}(t)=0$, which means that the particule remains at rest.\\
- When the boundary of $C(t)$ catches up {with} the particle, then this latter is pushed in an inward normal direction by the boundary of $C(t)$ to stay inside $C(t)$ and satisfies the {viability} constraint {$u(t)\in C(t)$}. This mechanical visualization led Moreau to call this problem the {\em sweeping process}: the particle is swept by the moving set.\\
Using the definition of the normal cone, it is easy to see that  (\ref{eq1.1})  is equivalent to the following evolution variational inequality:
$$\left\{
\begin{array}{l}
\mbox{Find } u(t)\in C(t) \mbox{ such that }\\
\langle \dot u(t),v-u(t)\rangle \geq 0,\;\;\forall v\in C(t) \mbox{ and for a. e. } t\in[0,T].
\end{array}
\right.
$$
In nonsmooth mechanics, the moving set is usually expressed in inequalities form, corresponding to the so-called unilateral constraints,
\begin{equation}\label{ineq}
C(t):=\bigcap_{i=1}^m \big\{x\in H: f_i(t,x)\le 0\big\},
\end{equation}
where $f_i: [0,T]\times \R^n\to \R,$ ($i=1,2,\ldots,m$) are some given regular convex functions.

Several extensions of the sweeping
process in diverse ways have been studied in the literature (see e.g. \cite{AHT, KM2} and references therein). A natural generalization of the sweeping process is the differential inclusion
\begin{equation}\label{eq-intro2}
   \left\{
     \begin{array}{l}
       -\dot{u}(t)\in {\rm N}_{C(t)}(u(t))+f(t,u(t))+\mathcal{F}(t,u(t))\\
       u(0)=u_0\in C(0)\\
       u(t)\in C(t),\quad\forall t\in [0,T],
     \end{array}
   \right.
\end{equation}
where $f:[0,T]\times H\to H$ is a Lipschitz mapping and $\mathcal{F}$ is a set-valued mapping from $[0,T]\times H$ into weakly compact convex sets of a Hilbert space $H$.\\
In this paper we are interested in
a new variant of the sweeping process {of the following form}
\begin{equation}\label{eq1.2}
\left\{
\begin{array}{c}
 -\dot u(t) \in {\rm N}_{C(t)}\big(A\dot u(t)+Bu(t)\big) \quad\mbox{ a.e. }t\in [0,T].\\
  u(0)=u_{0}\in H.\hspace{30mm}
\end{array}
\right.
\end{equation}
We assume that the following assumptions hold:\\
\\ \textit{$(\mathcal {SP}_{1})$} $A,B: H\longrightarrow H$ are two linear, bounded and symmetric operators satisfying:
$$\left\{
\begin{array}{l}
\langle Ax,x\rangle\geq \beta \|x\|^{2} , \text {for all } x\in H
\text { for some constant } \beta > 0\\
\langle Bx,x\rangle\geq 0 , \text {for all } x\in H.
\end{array}
\right.
$$
\\ \textit{$(\mathcal {SP}_{2})$} For every $t\in [0,T]$,
$C(t) \subset H$ is a closed convex and nonempty set { such that} $t\mapsto C(t)$ is absolutely  continuous, in the sense that there
exists a nondecreasing
 {absolutely {continuous} function} {$v:[0,T]\to \R^+$} with $v(0)=0$ such
that {$$ d_{H}(C(t),C(s))\leq v(t)-v(s), \text
{for all } 0\leq s\leq t\leq T,
$$}
where $d_{H}$ denotes the Hausdorff distance defined in (\ref{hausd}). \\
It is worth mentioning that in the particular case where $A:=I$ and $B:=0$, problem (\ref{eq1.2}) has been studied in \cite{bounkhel} by assuming that the set $C(t)$ is prox-regular and satisfying a compactness condition.\\
 The main goal of this paper is to prove a general
existence and uniqueness result for the implicit differential
inclusion described by (\ref{eq1.2}) by assuming that the set-valued mapping $t\mapsto C(t)$ moves in an absolutely continuous way with respect to the Hausdorff distance. By using an implicit time discretization, we solve at each iteration a variational inequality. The limit of a sequence of functions, constructed via linear interpolation, is showed to be a solution of
 (\ref{eq1.2}). For the particular case when the moving set $C(t)=C-f(t)$ (with $f\in W^{1,1}([0,T];H)$ and $C$ a fixed closed convex subset of $H$), we give an application to the quasistatic  frictional contact problem involving viscoelastic materials with short memory \cite{DL,SM}.
\\
The paper is organized as follows. In section {2}, we
introduce some notations and state some preliminary results which
will be used to establish the existence of discretization and to prove the convergence of the approximants. In section
 {3}, we present an existence and uniqueness theorem related
to the new variant of the sweeping process problem (\ref{eq1.2}).  In section 4, we give an application to the quasistatic frictional contact problem.

\section{Notation and preliminaries}

Let $H$ be a real separable Hilbert  space endowed with the inner
product $\langle \cdot,\cdot\rangle$ and the associated norm
$\Vert\cdot\Vert$. For any $x\in H$ and $r\geq 0,$ the closed ball
centered at $x$ with radius $r$ will be denoted by
$\overline{\mathbb{B}}(x,r).$ For $x=0$ and $r=1,$ we will set
$\overline{\mathbb{B}}$ instead of $\overline{\mathbb{B}}(0,1).$
Given a set-valued map $A:H\rightrightarrows H$, we denote by $D(A)$, $G(A)$ and $R(A)$ respectively the domain, the graph and the range of $A$, defined by
\begin{eqnarray*}
&&D(A) = \{x\in H\;:\; A(x) \neq \emptyset\},\;G(A) = \{(x, y)\in
\textcolor{black}{D(A)}\times H\; :\; y \in A(x)\}\\
 &&\mbox{ and } R(A)=\bigcup_{x\in D(A)}
A(x) .
\end{eqnarray*}
We define the \textit{inverse} of $A$, $A^{-1}$ by
  $$ y\in A(x) \Longleftrightarrow x\in A^{-1}(y),\mbox{ i.e. } (x,y)\in G(A) \Longleftrightarrow (y,x)\in G(A^{-1}).$$
We say that $A:H\rightrightarrows H$ is \textit{monotone} iff
  $$\langle x^*-y^*,x-y\rangle\geq 0,\;\forall x^*\in A(x),\;\forall y^*\in A(y).   $$
We say that $A:H\rightrightarrows H$ is \textit{maximal monotone}
iff it is monotone and its graph is maximal in the sense of the
inclusion, i.e., $G(A)$ is not properly contained in the graph of
any other monotone operator.

 Let $J :H \rightarrow ]-\infty,+\infty]$ be a lower semicontinuous, convex and proper function, i.e. $J\in \Gamma_0(H)$. The effective domain of $J$, denoted by Dom$(J)$ is defined by
 $${\rm Dom}(J)=\{x\in H\;:\;J(x)<+\infty\}.$$
For any $x\in {\rm Dom}(J)$, the subdifferential of $J$ at $x$ is defined by
\begin{equation}
\label{subdiff}
\partial J (x)=\{\xi\in H:\langle\xi, y-x\rangle
\leq J(y)-J (x),\forall y\in H\}.
\end{equation}
We recall that for $x\not\in {\rm Dom} (J)$, $\partial J(x)=\emptyset$ and that if $J$ is of class $C^1$ at $x$, then
$\partial J(x)=\{\nabla J(x)\}.$\\
For the above function $J$, its {\em Legendre-Fenchel conjugate} is defined as
$$
  J^*:H \to \R\cup\{-\infty,+\infty\} \quad \text{with}\;J^*(x^*)
  :=\sup_{x\in H}\big(\langle x^*,x \rangle -J(x)\big).
$$
The Legendre-Fenchel conjugate is also related to the subdifferential. Indeed, for $J(x)$ finite,
one has
$$
   x^* \in\partial J(x) \Leftrightarrow J^*(x^*)+J(x)=\langle x^*,x \rangle \Leftrightarrow  x \in\partial J^* (x^*),
$$
which means that $\big(\partial J  \big)^{-1}=\partial J^*,$ for every $J\in\Gamma_0(H)$.\\
Given a nonempty closed convex subset $C$ of $H$, those functions corresponding to the {\em indicator} ${\rm I}_C$,
to the {\em support function} $\sigma(C,\cdot)$ of $C$, and to the {\em distance function} $d_C$ from the
set $C$, are defined by
$$
 {\rm I}_C: H\to\R\cup\{+\infty\}  \quad\text{with}\;{\rm I}_C(x)=0\;\text{if}\;x\in C\;\text{and}\;
 {\rm I}_C(x)=+\infty\;\text{if}\;x\not\in C,
$$
$$
 \sigma(C,\cdot) : H \to\R\cup\{+\infty\} \quad\text{with} \; \sigma(C,x^*):=\sup_{x\in C}\langle x^*,x \rangle,
$$
$$
   d_C: H\to\R \quad \text{with}\;d_C(x):=\inf_{y\in C}\|x-y\|.
$$
From the definition of $\sigma(C,\cdot)$, we deduce that $\sigma(C,\cdot)$  coincides with the Legendre-Fenchel conjugate of
${\rm I}_C$, that is, $\sigma(C,\cdot)=({\rm I}_C)^*$.\\
When $J={\rm I}_C$ and $x\in C$, we have
 $$x^*\in \partial {\rm I}_C(x)\mbox{ if and only if }
\langle x^*, y-x \rangle \leq 0, \mbox{ for all }y\in C,$$
 so $\partial {\rm I}_C(x)$ is the set ${\rm N}_C(x)$ of
{\em outward normals} of the convex set $C$ at the point $x\in C$, defined by
$$
{\rm N}_{C}(x)=\{x^*\in H:\langle x^*, y-x\rangle \leq 0, \forall y\in C\}.
$$
We have also,
\begin{equation}\label{supp1}
   x^*\in {\rm N}_{C}(x) \quad\text{if and only if}\quad \sigma(C,x^*)=\langle x^*,x \rangle \;\text{and}\;x\in C.
\end{equation}
It is also clear from the inequality characterization above that
\begin{equation}\label{proj1}
   x-{\rm P}_C(x) \in {\rm N}_{C}\big({\rm P}_C(x)\big),\quad\text{for all}\;x\in H,
\end{equation}
where ${\rm P}_C(y)$ denotes the metric projection onto $C$.\\
It is easy to check that
\begin{equation}\label{normCo}
{\rm N}_C(-x)=-{\rm N}_{-C}(x),\;\;{\rm N}_C(y+z)= {\rm N}_{C-z}(y),
\end{equation}
for any $x\in -C$ and $y,\, z$ such that $y+z\in C$.\\
The  Hausdorff distance between two
subsets $C_{1}$ and $C_{2}$ of $H$, denoted by $d_{H}(C_{1},C_{2})$, is defined by
\begin{equation}
\label{hausd} d_{H}(C_{1},C_{2})=\max\big\{ \sup\limits_{x\in C_{2}}
d_{C_{1}}(x), \sup\limits_{x\in
C_{1}}d_{C_{2}}(x)\big\}.
\end{equation}
\vskip 2mm
{The following lemma will be useful.}
\begin{lemma} \label{lemsup}Let $C_1$ and $C_2$ be
two subsets of a Hilbert space $H$, and $z\in H$ . Assuming that
$d=d_H(C_1,C_2)<+\infty$ (i.e., $C_1$ and $C_2$ are non-empty),
then we have
\begin{equation}\label{base}\left|\sigma(C_1,z)-\sigma(C_2,z)\right|\leq\|z\|\,d_H(C_1,C_2).\end{equation}\end{lemma}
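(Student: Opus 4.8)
The plan is to establish the stated bound by proving the one-sided estimate
\[
\sigma(C_1,z)-\sigma(C_2,z)\le \|z\|\,d_H(C_1,C_2),
\]
and then to obtain the symmetric inequality $\sigma(C_2,z)-\sigma(C_1,z)\le \|z\|\,d_H(C_1,C_2)$ simply by interchanging the roles of $C_1$ and $C_2$; combining the two gives the absolute value in \eqref{base}. So the whole content is in the one-sided bound, which I would reduce to a pointwise comparison between a point of $C_1$ and a nearby point of $C_2$.

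First I would fix an arbitrary $x\in C_1$. By the definition \eqref{hausd} of the Hausdorff distance we have
\[
d_{C_2}(x)=\inf_{y\in C_2}\|x-y\|\le \sup_{x'\in C_1}d_{C_2}(x')\le d_H(C_1,C_2)=d .
\]
Since the infimum defining $d_{C_2}(x)$ need not be attained (the sets are not assumed closed here), I would pick, for an arbitrary $\vp>0$, a point $y\in C_2$ with $\|x-y\|\le d+\vp$. Then, splitting $\langle z,x\rangle=\langle z,x-y\rangle+\langle z,y\rangle$ and applying the Cauchy--Schwarz inequality to the first term,
\[
\langle z,x\rangle \le \|z\|\,\|x-y\| + \langle z,y\rangle \le \|z\|\,(d+\vp) + \sigma(C_2,z),
\]
where the last step uses $y\in C_2$ and the definition of the support function $\sigma(C_2,z)=\sup_{u\in C_2}\langle z,u\rangle$.

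Taking the supremum over all $x\in C_1$ yields $\sigma(C_1,z)\le \|z\|\,(d+\vp)+\sigma(C_2,z)$, and letting $\vp\downarrow 0$ gives the desired one-sided bound. By symmetry and the relation $d=d_H(C_1,C_2)$ we conclude \eqref{base}. I do not expect a genuine obstacle here: the only points requiring a little care are the use of the $\vp$-approximation instead of a minimizer (to avoid assuming closedness or compactness of $C_2$), and the observation that the argument is valid in the extended-real sense, so that if one support function is finite the inequality forces the other to be finite as well, making the absolute value meaningful.
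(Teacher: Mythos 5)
Your proof is correct and follows essentially the same route as the paper: a one-sided bound $\sigma(C_1,z)\le\sigma(C_2,z)+\|z\|\,d$ obtained from the Hausdorff estimate, then symmetrization. The only cosmetic difference is that the paper packages the step as the inclusion $C_1\subset C_2+d\,\overline{\mathbb{B}}$ and uses additivity of the support function over Minkowski sums, whereas your pointwise $\vp$-approximation makes the non-attainment of the infimum explicit; both yield the same estimate.
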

\begin{proof} {From the definition of the Hausdorff distance, we have
$$\sup_{x\in C_1}\;d_{C_2}(x)\leq d=d_H(C_1,C_2).$$
Hence, $C_1\subset C_2+d\overline{\mathbb{B}}$. On the other hand, we have,
$$\sigma(C_1,z)=\sup_{x\in C_1}\left<z,x\right>\leq\sup_{x\in C_2+d\overline{\mathbb{B}}}\left<z,x\right>=\sup_{x\in C_2}\left<z,x\right>+
d\,\sup_{x\in
\overline{\mathbb{B}}}\left<z,x\right>=\sigma(C_2,z)+d\,\left\|z\right\|.$$
Therefore,
\begin{equation}\label{one}\sigma(C_1,z)-\sigma(C_2,z)\leq\;\|z\|\,d_H(C_1,C_2).\end{equation}
Since $C_1$ and $C_2$ play a symmetric role, we obtain (\ref{base}).
}
\end{proof}

We collect below some classical results, that will be useful later, concerning maximal monotone operators (see e.g. \cite{B}).\\
\begin{lemma}\label{lem1}
\begin{enumerate}
  \item[{\rm (i)}] If $A$ is a maximal monotone operator with bounded domain, then
$A$ is onto. \\
  \item[{\rm (ii)}] Let  $A: H\longrightarrow H$ be a linear, bounded and
symmetric operator satisfying:
$$
\langle Ax,x\rangle\geq 0 , \text {for all } x\in H.
$$
 Then $A=\nabla\varphi$ for the continuous convex function
$\varphi(x)=\frac{1}{2}\langle Ax,x\rangle;$ $\forall x\in H.$
  \item[{\rm (iii)}] Let $A$  be a maximal monotone operator and $B$ be a maximal
Lipschitz single-valued operator from $H$ into $H$. Then $A+B$ is
maximal monotone.
\end{enumerate}
\end{lemma}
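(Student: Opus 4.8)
The plan is to derive all three assertions from the two workhorses of the theory of maximal monotone operators available in \cite{B}: \emph{Minty's surjectivity theorem} (a monotone operator $M$ on a Hilbert space is maximal monotone if and only if $R(I+M)=H$, equivalently the resolvent $(I+\lambda M)^{-1}$ is a single-valued nonexpansive map defined on all of $H$ for every $\lambda>0$), together with the weak--strong closedness of the graph of a maximal monotone operator. I would treat the three items separately.

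For (i), fix $y\in H$; the goal is to produce $\bar x$ with $y\in A(\bar x)$. For each $\mu>0$, Minty's theorem yields a unique $x_\mu\in D(A)$ solving $x_\mu+\mu A(x_\mu)\ni\mu y$, which rewrites as $y-\tfrac1\mu x_\mu\in A(x_\mu)$. Since $D(A)$ is bounded, the family $(x_\mu)$ lies in a fixed ball, so $y_\mu:=y-\tfrac1\mu x_\mu\to y$ strongly as $\mu\to+\infty$, while a subsequence satisfies $x_\mu\rightharpoonup\bar x$ weakly by reflexivity of $H$. Because $y_\mu\to y$ strongly and $x_\mu\rightharpoonup\bar x$ weakly, one has $\langle x_\mu,y_\mu\rangle\to\langle\bar x,y\rangle$, so the weak--strong closedness of $G(A)$ forces $y\in A(\bar x)\subset R(A)$. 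As $y$ is arbitrary, $A$ is onto.

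For (ii), the function $\varphi(x)=\tfrac12\langle Ax,x\rangle$ is a continuous quadratic form, bounded by $\tfrac12\|A\|\,\|x\|^2$. Expanding $\varphi(x+h)-\varphi(x)$ and using the symmetry of $A$ to identify $\langle Ah,x\rangle=\langle Ax,h\rangle$, the increment equals $\langle Ax,h\rangle+\tfrac12\langle Ah,h\rangle$, where the first term is linear in $h$ and the second is $O(\|h\|^2)$; hence $\varphi$ is Fr\'echet differentiable with $\nabla\varphi(x)=Ax$. Monotonicity of $\nabla\varphi$, immediate from $\langle A(x-y),x-y\rangle\ge 0$, then gives the convexity of $\varphi$. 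For (iii), note first that $A+B$ is monotone, being the sum of the monotone $A$ and the monotone Lipschitz $B$, so by Minty it suffices to solve a resolvent equation. Fix $z\in H$ and a scaling $\lambda>0$: solving $x+\lambda(A+B)(x)\ni z$ is equivalent to the fixed-point equation $x=(I+\lambda A)^{-1}\big(z-\lambda B(x)\big)$. The map $(I+\lambda A)^{-1}$ is nonexpansive and $\lambda B$ is $\lambda L$-Lipschitz (with $L$ the Lipschitz constant of $B$), so for $\lambda<1/L$ the right-hand side is a contraction and Banach's fixed-point theorem furnishes a solution; thus $R\big(I+\lambda(A+B)\big)=H$ and $\lambda(A+B)$ is maximal monotone by Minty. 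A short argument at the level of the definition of maximality, multiplying any hypothetical monotonically related pair by $\lambda$, transfers maximality from $\lambda(A+B)$ back to $A+B$.

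The routine parts are the algebra in (ii) and the contraction estimate in (iii). I expect the only genuinely delicate points to be, first, the limit passage in (i), which must use the mixed weak/strong pairing to invoke graph closedness rather than a naive closed-graph argument; and second, in (iii), the observation that one cannot in general take $\lambda=1$ when $L\ge 1$, so the rescaling to reach a contraction, followed by the transfer of maximality to unit scale, is the crux of the argument.
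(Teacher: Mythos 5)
The paper does not prove this lemma: it is presented as a collection of classical facts with a pointer to Brezis's monograph \cite{B}, so there is no in-paper argument to compare against. Your reconstruction is correct and is essentially the standard textbook route: for (i), the resolvent equation $x_\mu+\mu A(x_\mu)\ni\mu y$ combined with boundedness of $D(A)$, weak compactness, and the demiclosedness of the graph of a maximal monotone operator (which itself follows from passing to the limit in $\langle y_\mu-v,x_\mu-u\rangle\geq 0$ and invoking maximality); for (ii), the direct expansion of the quadratic form using symmetry of $A$; for (iii), Minty's criterion reduced to the fixed-point equation $x=(I+\lambda A)^{-1}\big(z-\lambda B(x)\big)$ with $\lambda<1/L$, followed by the observation that maximality of $\lambda(A+B)$ transfers to $A+B$. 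The only points worth flagging are cosmetic: in (i) you should make explicit that ``weak--strong closedness'' is itself a consequence of maximality (it is the limit passage in the monotonicity inequality, not a separate closed-graph fact), and in (iii) the monotonicity of $A+B$ does require interpreting the paper's phrase ``maximal Lipschitz'' as ``monotone and Lipschitz'', which is clearly the intended reading since a continuous monotone operator defined on all of $H$ is automatically maximal.
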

%
%
%

%

%
\vskip 2mm
We end this section with the following lemma on the approximation
of unbounded $C(t).$
\vskip 2mm
\begin{lemma}\label{lem5}{\rm \cite{KM2,Mo}}
Let $C:[0,T]\rightrightarrows H$, $t\mapsto C(t)$ satisfy \textit{$(\mathcal {SP}_{2})$}. Then
there exists an $n_{0}\in \mathbb{N}$ such that for all $n\geq
n_{0}$ we have $C_{n}(t):=C(t)\cap \overline{\mathbb{B}}(0,n)\neq
\emptyset$ for $t\in [0,T]$, and
$$
d_{H}(C_{n}(t),C_{n}(s))\leq
8d_{H}(C(t),C(s)) {\leq  8 \big(v(t)-v(s)\big)}, \mbox { for
all }0\leq s\leq t\leq T.
$$

\end{lemma}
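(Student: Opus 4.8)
The plan is to manufacture a single reference point that lies near the origin inside \emph{every} set $C(t)$, and to use it both to guarantee nonemptiness of the truncations and to pull distant points back into the ball $\overline{\mathbb{B}}(0,n)$ with a controlled displacement. I would fix $u_0\in C(0)$, which is nonempty by $(\mathcal{SP}_2)$, and set $p_t:={\rm P}_{C(t)}(u_0)$. Since $d_{C(t)}(u_0)\le d_H(C(t),C(0))\le v(t)-v(0)\le v(T)$, the projection obeys $\|p_t\|\le\|u_0\|+v(T)=:\rho$ for every $t\in[0,T]$. Choosing $n_0\in\mathbb{N}$ with $n_0\ge\frac{4}{3}\rho$, we then have $p_t\in C(t)\cap\overline{\mathbb{B}}(0,n)=C_n(t)$ for all $t\in[0,T]$ and all $n\ge n_0$, which already settles the nonemptiness assertion.

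For the distance estimate, fix $n\ge n_0$ and $0\le s\le t\le T$, and write $r:=d_H(C(s),C(t))$. By symmetry it suffices to show that each $x\in C_n(s)$ has some $y\in C_n(t)$ with $\|x-y\|\le 8r$. Since $d_{C(t)}(x)\le r$, I would pick $z\in C(t)$ with $\|x-z\|\le r$, so that $\|z\|\le n+r$. If $\|z\|\le n$ then $y:=z$ works. Otherwise I move $z$ toward the reference point $p_t\in C(t)$ along the segment $[z,p_t]$, which stays in $C(t)$ by convexity, and set $y:=(1-\lambda)z+\lambda p_t$ with the calibrated choice $\lambda:=\dfrac{r}{n+r-\rho}\in[0,1]$.

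This choice is engineered so that $y$ lands exactly on the sphere of radius $n$: one checks $\|y\|\le(1-\lambda)(n+r)+\lambda\rho=n$, hence $y\in C_n(t)$, while the triangle inequality gives $\|x-y\|\le\|x-z\|+\lambda\|z-p_t\|\le r+\lambda(n+r+\rho)=2r\,\dfrac{n+r}{n+r-\rho}$. The calibration $n_0\ge\frac{4}{3}\rho$ is precisely what forces $\frac{n+r}{n+r-\rho}\le 4$ uniformly in $n\ge n_0$ and $r\ge 0$, yielding $\|x-y\|\le 8r$. Exchanging the roles of $C(s)$ and $C(t)$ then gives $d_H(C_n(s),C_n(t))\le 8\,d_H(C(s),C(t))$, and $(\mathcal{SP}_2)$ supplies the final bound $\le 8\big(v(t)-v(s)\big)$.

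The only genuinely delicate point is this quantitative pull-back: the displacement $2r(n+r)/(n+r-\rho)$ must be bounded \emph{uniformly} in $n$ and in the possibly large value $r\in[0,v(T)]$, and it is the interplay between the choice of $\lambda$ and the lower bound on $n_0$ that produces the explicit constant $8$. Everything else — existence of the near point $z$, membership $y\in C(t)$ from convexity, and the nonemptiness of the truncations — is routine.
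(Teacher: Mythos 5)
Your argument is correct, and it is worth noting that the paper itself offers no proof of this lemma at all --- it is quoted from \cite{KM2,Mo} --- so what you have produced is a complete, self-contained derivation of a cited result, in the spirit of Moreau's classical estimate for intersections of moving convex sets. I checked the details: $\|p_t\|\le\|u_0\|+v(T)=\rho$ follows from $d_{C(t)}(u_0)\le d_H(C(t),C(0))\le v(t)-v(0)$ and the monotonicity of $v$; the calibrated retraction $y=(1-\lambda)z+\lambda p_t$ with $\lambda=r/(n+r-\rho)$ does satisfy $(1-\lambda)(n+r)+\lambda\rho=n$, so $y\in C_n(t)$ by convexity; and the displacement bound
\[
\|x-y\|\le r+\lambda\bigl(n+r+\rho\bigr)=2r\,\frac{n+r}{n+r-\rho}\le 8r
\]
holds precisely because $n\ge n_0\ge\tfrac{4}{3}\rho$ gives $\rho\le\tfrac34(n+r)$. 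The roles of $s$ and $t$ are genuinely interchangeable (the reference points $p_s,p_t$ are both bounded by $\rho$), so the Hausdorff bound follows. Two cosmetic remarks only: you should say explicitly that $z$ can be taken as ${\rm P}_{C(t)}(x)$ (so that $\|x-z\|=d_{C(t)}(x)\le r$ exactly, rather than up to $\varepsilon$ --- this is where closedness and convexity of $C(t)$ in a Hilbert space are used), and for the nonemptiness claim alone $n_0\ge\rho$ would suffice; the factor $\tfrac43$ is only needed for the constant $8$ in the distance estimate.
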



\section{Main result}
The following theorem establishes the well-posedness  (existence and
uniqueness result) of the evolution problem (\ref{eq1.2}).
\vskip 2mm
\begin{theorem}\label{theo1}
Assume that \textit{$(\mathcal {SP}_{1})$}  and \textit{$(\mathcal
{SP}_{2})$} are satisfied. Then for any initial point $u_{0} \in
H,$ with ${Bu_0\in C(0)}$ there exists a unique
Lipschitz continuous mapping $u: [0,T]\rightarrow H$ satisfying
{\normalfont(\ref{eq1.2})}, i.e.
$$ -\dot u(t) \in {\rm N}_{C(t)}(A\dot u(t)+Bu(t)) \quad\mbox{ a.e. }t\in [0,T],\;\;\;u(0)=u_{0}.$$
\end{theorem}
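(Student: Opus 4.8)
\noindent The plan is to adapt Moreau's catching-up algorithm to the implicit inclusion (\ref{eq1.2}): discretize in time and solve a variational inequality at each step, derive a uniform bound on the discrete velocities, show that the linear interpolants form a Cauchy sequence (thereby bypassing any compactness requirement in the infinite-dimensional setting), and finally pass to the limit in the discretized inequality. Uniqueness will be handled separately by a short monotonicity argument. For the discretization, given a partition $0=t_0^n<\cdots<t_{N_n}^n=T$ of vanishing mesh, set $h_i=t_{i+1}^n-t_i^n$ and define $u_{i+1}^n$ from $u_i^n$ by
$$-\frac{u_{i+1}^n-u_i^n}{h_i}\in {\rm N}_{C(t_{i+1}^n)}\Big(A\,\frac{u_{i+1}^n-u_i^n}{h_i}+Bu_{i+1}^n\Big),\quad u_0^n=u_0.$$
Writing $v_{i+1}^n=(u_{i+1}^n-u_i^n)/h_i$ and $M_i=A+h_iB$ (bounded, symmetric and $\beta$-coercive by $(\mathcal{SP}_1)$), the cone argument becomes $w_{i+1}^n=M_iv_{i+1}^n+Bu_i^n$, so the step is equivalent to: find $w\in C(t_{i+1}^n)$ with $\langle M_i^{-1}(w-Bu_i^n),\,y-w\rangle\ge0$ for all $y\in C(t_{i+1}^n)$. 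Since $M_i^{-1}$ is strongly monotone and Lipschitz this has a unique solution; equivalently, one may truncate $C$ by Lemma \ref{lem5} to a bounded set and combine Lemma \ref{lem1}(iii) (maximality of $M_i^{-1}+{\rm N}_{C}$) with Lemma \ref{lem1}(i) (onto on a bounded domain), the a priori bound below ensuring the truncation is eventually inactive.

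\noindent For the a priori estimate, I initialize with $Bu_0\in C(0)$ (formally $v_0^n=0$, $w_0^n=Bu_0\in C(0)$) and test the $i$-th inclusion at the projection onto $C(t_{i+1}^n)$ of the previous value $w_i^n\in C(t_i^n)$, whose distance to $w_i^n$ is at most $v(t_{i+1}^n)-v(t_i^n)$ by $(\mathcal{SP}_2)$. Discarding the nonnegative terms $\langle Bv,v\rangle$ and $\langle A(v_{i+1}^n-v_i^n),v_{i+1}^n-v_i^n\rangle$ and using the symmetry of $A$, this yields
$$\tfrac12\big(\langle Av_{i+1}^n,v_{i+1}^n\rangle-\langle Av_i^n,v_i^n\rangle\big)\le\|v_{i+1}^n\|\,\big(v(t_{i+1}^n)-v(t_i^n)\big).$$
Setting $b_i=\sqrt{\langle Av_i^n,v_i^n\rangle}\ge\sqrt\beta\,\|v_i^n\|$, this telescopes to $b_{i+1}-b_i\le\frac{2}{\sqrt\beta}(v(t_{i+1}^n)-v(t_i^n))$, hence a bound $\|v_i^n\|\le L$ independent of $n$ and $i$, with $L$ depending only on $\beta$ and $v(T)$. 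Thus the piecewise linear interpolants $u_n$ are equi-Lipschitz and their a.e. derivatives $\dot u_n$ are bounded in $L^\infty([0,T];H)$.

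\noindent Next I prove strong convergence by a Cauchy estimate. Let $\theta_n(t)$ be the right endpoint of the subinterval containing $t$, let $\hat u_n$ be the right-endpoint step interpolant and $w_n=A\dot u_n+B\hat u_n$, so that $-\dot u_n(t)\in{\rm N}_{C(\theta_n(t))}(w_n(t))$ a.e. For two levels $n,m$, testing each inclusion at the projection of the other's value (using Lemma \ref{lemsup} and $(\mathcal{SP}_2)$) gives
$$\langle\dot u_n-\dot u_m,\,w_n-w_m\rangle\le 2L\,|v(\theta_n(t))-v(\theta_m(t))|\quad\text{a.e.}$$
Expanding $w_n-w_m=A(\dot u_n-\dot u_m)+B(\hat u_n-\hat u_m)$, using $\beta$-coercivity of $A$, the identity $\tfrac{d}{dt}\langle B(u_n-u_m),u_n-u_m\rangle=2\langle B(u_n-u_m),\dot u_n-\dot u_m\rangle$, the bound $\|\hat u_n-u_n\|_\infty\le|\pi_n|L\to0$, and integrating over $[0,T]$ with $u_n(0)-u_m(0)=0$, I obtain
$$\beta\int_0^T\|\dot u_n-\dot u_m\|^2\,dt\le 2L\int_0^T|v(\theta_n)-v(\theta_m)|\,dt+o(1),$$
which tends to $0$ as $n,m\to\infty$ by dominated convergence, since $\theta_n(t)\to t$ and $v$ is continuous. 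Hence $\dot u_n\to\dot u$ strongly in $L^2$ and $u_n\to u$ uniformly, where $u(t)=u_0+\int_0^t\dot u(s)\,ds$ is Lipschitz with $\|\dot u\|\le L$.

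\noindent Finally, I pass to the limit. Along a subsequence, $\dot u_n(t)\to\dot u(t)$ and $w_n(t)\to w(t):=A\dot u(t)+Bu(t)$ for a.e. $t$; since $w_n(t)\in C(\theta_n(t))$ and $d_H(C(\theta_n(t)),C(t))\le|v(\theta_n(t))-v(t)|\to0$, closedness of $C(t)$ gives $w(t)\in C(t)$. Fixing any $z\in C(t)$ and choosing $z_n={\rm P}_{C(\theta_n(t))}(z)\to z$, the inequality $\langle\dot u_n(t),z_n-w_n(t)\rangle\ge0$ passes to the limit to give $\langle\dot u(t),z-w(t)\rangle\ge0$ for every $z\in C(t)$, i.e. $-\dot u(t)\in{\rm N}_{C(t)}(A\dot u(t)+Bu(t))$ a.e., with $u(0)=u_0$. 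For uniqueness, if $u_1,u_2$ solve (\ref{eq1.2}) with the same datum, monotonicity of the normal cone applied to $w_j=A\dot u_j+Bu_j\in C(t)$ gives
$$\langle\dot u_1-\dot u_2,A(\dot u_1-\dot u_2)\rangle+\tfrac12\tfrac{d}{dt}\langle B(u_1-u_2),u_1-u_2\rangle\le0,$$
and integrating with $\beta$-coercivity of $A$, $B\ge0$ and $u_1(0)=u_2(0)$ forces $\dot u_1=\dot u_2$ a.e., hence $u_1=u_2$. I expect the main obstacle to be the pair of discrete estimates (the uniform velocity bound and the Cauchy property): both must extract the coercivity of $A$ and the positivity of $B$ from the implicit scheme while controlling the motion of $C(t)$ through its absolute continuity, and the compatibility condition $Bu_0\in C(0)$ is exactly what is needed to start the induction.
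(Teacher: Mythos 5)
Your proof is correct, but it departs from the paper's argument at almost every stage, generally in ways that simplify it. (i) Your scheme is fully implicit in $B$ (normal cone evaluated at $Av_{i+1}^n+Bu_{i+1}^n$), whereas the paper keeps $B$ explicit ($Az_{i+1}^n+Bu_i^n$); both discretizations converge to the same limit inclusion. (ii) For solvability of each step you invoke Lions--Stampacchia for the strongly monotone Lipschitz map $w\mapsto M_i^{-1}(w-Bu_i^n)$ on the closed convex set $C(t_{i+1}^n)$, which works directly on a possibly unbounded $C(t)$; the paper instead truncates via Lemma \ref{lem5} and uses surjectivity of maximal monotone operators with bounded domain. (iii) Your a priori bound comes from testing at the projection of the previous constrained value $w_i^n\in C(t_i^n)$ and telescoping $b_{i+1}-b_i\le\frac{2}{\sqrt\beta}\big(v(t_{i+1}^n)-v(t_i^n)\big)$, yielding $L=\frac{2v(T)}{\beta}$ with no exponential factor, while the paper measures the distance of $Bu_i^n$ back to $C_n(0)$ and needs a discrete Gronwall inequality, producing $M=\frac{8v(T)}{\beta}e^{\|B\|T/\beta}$. (iv) Most significantly, your Cauchy estimate gives \emph{strong} convergence of $\dot u_n$ in $L^2$, which makes the passage to the limit a pointwise a.e.\ argument and bypasses the paper's entire weak-compactness machinery: the Helly-type selection theorem from \cite{MM}, the weak lower semicontinuity of $x\mapsto\int\langle x,Ax\rangle$, the identity $\int\langle Bu_n,\dot u_n\rangle=\varphi_B(u_n(T))-\varphi_B(u_n(0))$, the support-function estimates of Lemma \ref{lemsup}, and the Lebesgue differentiation step. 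The cross-testing inequality $\langle\dot u_n-\dot u_m,w_n-w_m\rangle\le 2L\,|v(\theta_n(t))-v(\theta_m(t))|$ is exactly the right tool here and all the error terms ($\hat u_n-u_n$, the sign of the $B$-term after integration) are handled correctly. Your uniqueness argument also integrates the $B$-term exactly rather than estimating it and invoking Gronwall as the paper does. In short: same catching-up skeleton, but a strong-convergence proof in place of a weak-convergence one, with a sharper velocity bound; the only thing you give up is that the paper's weak argument would survive in settings where a Cauchy estimate is unavailable.
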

\begin{proof}
We proceed by discretization of the evolution problem
(\ref{eq1.2}): a sequence of continuous mappings $(u_{n}(.))_{n\in
\N}$ in $C([0, T],H)$ will be defined such that the limit of a
convergent subsequence is a solution
of (\ref{eq1.2}). The
sequence is defined via an implicit algorithm. The proof will be divided into five steps.\\
\noindent  \textbf{Step 1}.Construction of approximants $u^{n}_{i}$. Consider for
each $n\in \mathbb{N}^{*}$ the following partition of the interval
$I:=[0,T]$

$$
t^{n}_{i}:=i\frac{T}{n} \hspace{22mm}\;\quad \text {for} \; 0\leq
i\leq n,
$$
\begin{equation}\label{eq4.1}
I^{n}_{i}:= ]t^{n}_{i},t^{n}_{i+1}] \quad\quad \text {for} \;
0\leq i\leq n-1,
\end{equation}
$$
I^{n}_{0}:= \{t^{n}_{0}\}.\hspace{48mm}
$$
Lemma \ref{lem5} ensures
the existence of $n_{0}\in \mathbb{N}$ such that for all $n\geq
n_{0}$ we have $C_{n}(t):=C(t)\cap \overline{\mathbb{B}}(0,n)\neq
\emptyset$ for $t\in [0,T]$, and
\begin{equation}\label{eq4.2}
d_{H}(C_{n}(t),C_{n}(s))\leq
8d_{H}(C(t),C(s)) {\leq  8 \big(v(t)-v(s)\big)}, \mbox { for
all }0\leq s\leq t\leq T.
\end{equation}
We propose the following numerical method based on the discretization of (\ref{eq1.2}):\\
Set $\mu_{n}:=\frac{T}{n}.$ Fix $n\geq n_{0}.$ We choose by induction : \\
 $\bullet$ $u_{0}^{n}=u_{0},$ \\
For $i=0,1,\ldots n-1 :$ \\
$\bullet$ {Find} $z^{n}_{i+1}$ by solving the following variational inclusion
\begin{equation}
\label{algo}
 -z^{n}_{i+1} \in
{\rm N}_{C_{n}(t^{n}_{i+1})}(Az^{n}_{i+1}+Bu^{n}_{i}).
\end{equation}
We show later that the following estimation holds:
{\begin{equation} \label{estimation} \| z^{n}_{i+1}
\|\leq \frac{8v(T)}{\beta}\exp(\frac{1}{\beta}\|B\|T):=M.
\end{equation}}
$\bullet$ Set: $u^{n}_{i+1}=u^{n}_{i}+\mu_{n}z^{n}_{i+1}$\\
The numerical method proposed above is well defined. Indeed, for $i=0,$ we have
\begin{equation}\label{eq4.3}
-z^{n}_{1} \in {\rm N}_{C_{n}(t^{n}_{1})}(Az^{n}_{1}+Bu^{n}_{0})
\end{equation}
or equivalently,
\begin{equation}\label{eq4.3bis}
-z^{n}_{1} \in {\rm N}_{C_{n}(t^{n}_{1})-Bu_{0}}(Az^{n}_{1}).
\end{equation}
 Assumption \textit{$(\mathcal {SP}_{1})$} implies that
$A^{-1}:H\longrightarrow H$ is monotone and $\frac{1}{\beta}$
Lipschitz. Hence, (\ref{eq4.3bis}) can be rewritten as
\begin{equation}\label{eq4.4}
0 \in [A^{-1}+{\rm N}_{C_{n}(t^{n}_{1})-Bu_{0}}](Az^{n}_{1}).
\end{equation}
The lemma \ref{lem1} ensures that the operator
$$A^{-1}+{\rm N}_{C_{n}(t^{n}_{1})-Bu_{0}}:C_{n}(t^{n}_{1})-Bu_{0}\rightrightarrows R(A^{-1}+{\rm N}_{C_{n}(t^{n}_{1})-Bu_{0}})$$ is also
maximal monotone with domain $C_{n}(t^{n}_{1})-Bu_{0}.$ \\
As the operator $B$ is bounded and all sets $C_{n}(t)$ are bounded, it
follows from Lemma \ref{lem1} that
$[A^{-1}+{\rm N}_{C_{n}(t^{n}_{1})-Bu_{0}}]$ is onto.\\
Consequently,
$$R(A^{-1}+{\rm N}_{C_{n}(t^{n}_{1})-Bu_{0}})=H,$$
 i.e. there exists
$z^{n}_{1}$ solution of (\ref{eq4.3}) or (\ref{eq4.4}) such that
$Az^{n}_{1}+Bu_{0}\in C_{n}(t^{n}_{1})$. \\
We set then, $u^{n}_{1}=u^{n}_{0}+\mu_{n}z^{n}_{1}$. \\
By (\ref{eq4.3}) we have

\begin{equation}\label{eq4.5}
\langle A z^{n}_{1}+Bu^{n}_{0}-v,z^{n}_{1}\rangle \leq 0, \mbox {
for all}
 \; v\in C_{n}(t^{n}_{1}).
\end{equation}
Using \textit{$(\mathcal {SP}_{1})$} and (\ref{eq4.5}), we have
\begin{eqnarray*}
\beta\| z^{n}_{1} \|^{2} &\leq& \langle A
z^{n}_{1},z^{n}_{1}\rangle
\\
   &= & \langle A
z^{n}_{1}+Bu^{n}_{0}-v+v-Bu^{n}_{0},z^{n}_{1}\rangle\\
   &= & \langle A
z^{n}_{1}+Bu^{n}_{0}-v,z^{n}_{1}\rangle +\langle
v-Bu^{n}_{0},z^{n}_{1}\rangle
   \\
   &\leq & \langle v-Bu^{n}_{0},z^{n}_{1}\rangle
    \\
   &\leq & {\|v-Bu^{n}_{0}\|}\|z^{n}_{1}\|, \mbox{ for all
   } v\in C_{n}(t^{n}_{1}).
\end{eqnarray*}
{{Hence, }
$$\| z^{n}_{1} \| \leq \frac{1}{\beta} \inf_{v\in
C_{n}(t^{n}_{1})}\|v-Bu^{n}_{0}\|=\frac{1}{\beta}d(Bu^{n}_{0},C_{n}(t^{n}_{1})).$$}
{Using the fact that  $Bu^{n}_{0}\in C_{n}(0) $ (since $Bu^{n}_{0}\in C(0)$ and $B$ is bounded), we
get
\begin{eqnarray*}
\| z^{n}_{1} \|
   &\leq & \frac{1}{\beta}d_{H}(C_{n}(t^{n}_{1}),C_{n}(0))
   \\
   &\leq & \frac{8}{\beta}d_{H}(C(t^{n}_{1}),C(0))
    \\
   &\leq & \frac{8}{\beta}\big( v(t^{n}_{1})-v(0)\big)
   \\
   &\leq & \frac{8v(T)}{\beta}.
\end{eqnarray*}}
Now suppose that
$u^{n}_{0},u^{n}_{1},...,u^{n}_{i},z^{n}_{1},z^{n}_{2},...,z^{n}_{i}$
have been constructed. Observe that the operator
$A^{-1}+{\rm N}_{C_{n}(t^{n}_{i+1})-Bu^{n}_{i}}:C_{n}(t^{n}_{i+1})-Bu^{n}_{i}\rightrightarrows
R(A^{-1}+{\rm N}_{C_{n}(t^{n}_{i+1})-Bu^{n}_{i}})$ is maximal monotone
with bounded domain  $C_{n}(t^{n}_{i+1})-Bu^{n}_{i}.$ Then Lemma
\ref{lem1} ensures that
$R(A^{-1}+{\rm N}_{C_{n}(t^{n}_{i+1})-Bu^{n}_{i}})=H.$ Therefore, there
exists $z^{n}_{i+1}$ such that $Az^{n}_{i+1}+Bu^{n}_{i} \in
C_{n}(t^{n}_{i+1})$ solution of
\begin{equation}\label{eq4.6}
-z^{n}_{i+1} \in {\rm N}_{C_{n}(t^{n}_{i+1})}(Az^{n}_{i+1}+Bu^{n}_{i}),
\end{equation}
which allows us to set
$u^{n}_{i+1}:=u^{n}_{i}+\mu_{n}z^{n}_{i+1}$.\\
{Also (\ref{eq4.6}),
$u^{n}_{i}=u^{n}_{0}+\mu_{n}\sum_{k=1}^{i}z^{n}_{k}$,
$Bu^{n}_{0}\in C_{n}(0) $  and \textit{$(\mathcal {SP}_{1})$}
imply that for all $v\in C_{n}(t^{n}_{i+1})$}\\
{\begin{eqnarray*} \| z^{n}_{i+1} \| &\leq&
\frac{1}{\beta}\|v-Bu^{n}_{i}\|=\frac{1}{\beta}\|v-Bu_0-\mu_{n}\sum_{k=1}^{i}Bz^{n}_{k}\|
\\
   &\leq & \frac{1}{\beta}d(Bu_0,C_{n}(t^{n}_{i+1}))+\frac{1}{\beta}\|B\|\mu_{n}\sum_{k=1}^{i}\|z^{n}_{k}\|\\
   &\leq & \frac{1}{\beta}d_{H}(C_{n}(0),C_{n}(t^{n}_{i+1}))+\frac{1}{\beta}\|B\|\mu_{n}\sum_{k=1}^{i}\|z^{n}_{k}\|
   \\
   &\leq & \frac{8}{\beta}d_{H}(C(0),C(t^{n}_{i+1}))++\frac{1}{\beta}\|B\|\mu_{n}\sum_{k=1}^{i}\|z^{n}_{k}\|
    \\
   &\leq &
   \frac{8}{\beta}\big( v(t^{n}_{i+1})-v(0)\big)+\frac{1}{\beta}\|B\|\mu_{n}\sum_{k=1}^{i}\|z^{n}_{k}\|.
\end{eqnarray*}}
Hence, {\begin{eqnarray*} \| z^{n}_{i+1} \|
   &\leq & \frac{8v(T)}{\beta}+\frac{1}{\beta}\|B\|\mu_{n}\sum_{k=1}^{i}\|z^{n}_{k}\|.
\end{eqnarray*}}
Using a discrete version of Gronwall's inequality, we obtain
{$$\| z^{n}_{i+1} \|
  \leq  \frac{8v(T)}{\beta}\exp(\frac{1}{\beta}\|B\|i\mu_{n}
  )$$ or equivalently,
  $$\| z^{n}_{i+1} \|
  \leq  \frac{8v(T)}{\beta}\exp(\frac{1}{\beta}\|B\|T):=M, $$}
    which means that (\ref{estimation}) is satisfied and the numerical method is therefore well defined.\\
\textbf{Step 2}.Construction of the sequence $(u_{n}(.))$.\\
Using the sequences $u^{n}_{i}$ and  $z^{n}_{i}$, we construct the
sequence of mapping $u_{n}:[0,T]\to H,\;t\mapsto u_n(t)$  by defining their
restrictions to each
interval $I^{n}_{i}$ as follows:\\
$$
u_n(t)=\left\{
\begin{array}{lll}
u^{n}_{0} &\mbox{ if }& t=0\\
\\
u_{i}^{n}+\frac{(t-t_{i}^{n})}{\mu_{n}}(u_{i+1}^{n}-u_{
i}^{n})&\mbox{ if } &t \in I^{n}_{i}, i=0,1,\ldots,n-1.
\end{array}
\right.
$$
Clearly the mapping $u_{n}(\cdot)$ is Lipschitz on $[0,T],$ and $M$ is a Lipschitz
constant of $u_{n}(\cdot)$ on $[0,T]$ since for every $t \in
]t_{i}^{n},t_{i+1}^{n}[$, we have
$$\dot u_{n}(t)=\frac{(u_{i+1}^{n}-u_{
i}^{n})}{\mu_{n}}=z^{n}_{i+1}  \mbox{ with }Az^{n}_{i+1}+Bu^{n}_{i}\in
C_{n}(t^{n}_{i+1}). $$
Furthermore, for every $t \in [0,T]$ one has
$u_{n}(t)=u_{0}+\int\limits_{0}^{t} \dot u_{n}(s)ds$. \\
Hence,
$$\|u_{n}(t) \|\leq \|u_{0}\|+MT.$$
By  (\ref{algo}) we have
\begin{equation}
\label{incl}
-z^{n}_{i+1} \in {\rm N}_{C_{n}(t^{n}_{i+1})}(Az^{n}_{i+1}+Bu^{n}_{i}).
\end{equation}
Define the functions $\theta_n$ and $\delta_n$ from $[0,T]$
to $[0,T]$ by $\theta_n(t)=t^{n}_{i+1}$ and
$\delta_n(t)=t^{n}_{i}$ for any $t\in I^{n}_{i}$.
Inclusion (\ref{incl}) becomes
\begin{equation}\label{eq4.7}
- \dot u_{n}(t) \in {\rm N}_{C_{n}(\theta_n(t))}( A\dot
u_{n}(t)+Bu_{n}(\delta_n(t))) \mbox{ a.e. } t\in [0,T].
\end{equation}
\textbf{Step 3}.Convergence of $(u_{n}(.))$. First, We note that $$\sup_{t\in [0,T]}|\theta_n(t)-t|\rightarrow
0 \mbox{ as } n\rightarrow\infty \mbox{ and }\sup_{t\in
[0,T]}|\delta_n(t)-t|\rightarrow 0 \mbox{ as }
n\rightarrow\infty.$$ Now, let us prove the convergence of
sequences $(u_{n})$ and $(\dot u_{n})$. We have for all $n\geq
n_{0}$

\begin{equation*}
\left\{
\begin{array}{c}
 \|u_{n}(t) \|\leq
\|u_{0}\|+TM, \mbox {  } \mbox { for all }  t\in [0,T] \mbox{ and} \\
 \|\dot u_{n}(t) \|\leq M \mbox { for almost all }
  t\in [0,T].
\end{array}
\right.
\end{equation*}
We deduce that the sequence $(u_{n})$ is uniformly bounded in norm and
variation. Using Theorem 0.2.1 in \cite{MM}, there exists a
function $u: [0,T]\rightarrow H$ of bounded variation and a
subsequence, still denoted $(u_{n})$, such that
\begin{equation}
\label{1eq}
u_{n}(t)\rightharpoonup u(t) \mbox { weakly in } H \mbox { for all } t\in [0,T],
\end{equation}
\begin{equation}
\label{2eq}
u_{n}\rightharpoonup u \mbox { in the weak-star topology of } L^{\infty}([0,T],H),
\end{equation}
and, for some $v_*\in L^{2}([0,T],H)$
\begin{equation}\label{eq4.8}
\dot u_{n}\rightharpoonup v_{*} \mbox { in the weak topology of
} L^{2}([0,T],H).
\end{equation}
 In particular, $u(0)=u_{0}$. The Lipschitz continuity of
$u_{n}$ and the weak lower semicontinuity  of the norm give
\begin{eqnarray*}
\|u(t)-u(s)\|\le \liminf_{n\to +\infty}\|u_n(t)-u_n(s)\|\le M |t-s| \mbox { for
all }  t,s\in [0,T],
\end{eqnarray*}
which shows that $u(\cdot)$ is Lipschitz continuous on $[0,T]$, and hence its derivative $\dot{u}(\cdot)$ exists for almost every $t\in [0,T]$.\\
Fix any $t\in [0,T]$. For each  $w\in H$ with $\|w\|\leq 1$,
 we can write
\begin{align}\label{cvfaible}
   \Big|\langle w,u_n(\theta_n(t))-u(t)\rangle \Big|
  & \leq  \Big|\langle w,u_n(\theta_n(t))-u_n(t)\rangle \Big|+ \Big|\langle w,u_n(t)-u(t)\rangle \Big|\nonumber\\
  & \leq M  \Big|\theta_n(t)-t \Big|+  \Big|\langle w,u_n(t)-u(t)\rangle \Big|.
\end{align}
Taking into account \eqref{1eq}, we get $u_n(\theta_n(t))
\rightharpoonup u(t)$ weakly in $H$ as $n\to\infty$. On the other
hand, we have
$$
   \langle w,u_n(t) \rangle = \langle w,u_0\rangle +\int_0^T\Big\langle \mathbf{1}_{[0,t]}(s)w,\dot{u}_n(s)\Big\rangle\,ds.
$$
Using \eqref{eq4.8} and taking the limit as $n\to\infty$, we obtain
$$
\langle w, u(t)\rangle =\langle w,u_0\rangle +\int_0^T\Big\langle \mathbf{1}_{[0,t]}(s)w,v_*(s)\Big\rangle\,ds
                       = \Big\langle w,u_0+ \int_0^t v_* (s)\,ds \Big\rangle.
$$
The latter equality being true for all $w\in H$, we deduce that $u(t)=u_0+\int_0^t v_* (s)\,ds$,
and this guarantees that $\dot{u}(\cdot)=v_* (\cdot)$ almost everywhere. \\
Consequently,
\begin{equation}\label{eq3.5ter}
   \dot{u}_n(\cdot) \rightharpoonup \dot{u}(\cdot)\:\text {weakly in}\; L^2([0,T],H).
\end{equation}
 Using (\ref{cvfaible}), we deduce that $u_{n}(\theta_{n}(t)) \rightharpoonup u(t)$, as $n\to +\infty$,
weakly in $H$, for all $t\in [0,T]$.\\
\textbf{Step 4}.We show that $u(.)$ is a solution of
(\ref{eq1.2}).\\
 Let us prove first the following viability condition:
\begin{equation}
\label{viab}
 A\dot u(t)+Bu(t)\in C(t),\mbox { a.e. on }[0,T].
\end{equation}
Fix any $t\in [0,T]$ such that $\dot u(t)$ exists. For each $z\in
H$ with $\|z\| \leq1$, we can write
\begin{eqnarray*}
\langle z,A\dot u(t)+Bu(t) \rangle &=& \langle z,A\dot u(t)+Bu(t)-
A\dot u_{n}(t)-Bu_{n}(\delta_{n}(t))+ A\dot
u_{n}(t)+Bu_{n}(\delta_{n}(t))\rangle
\\
   &= & \langle z,A\dot
u_{n}(t)+Bu_{n}(\delta_{n}(t))\rangle+\langle z,A\dot u(t)- A\dot
u_{n}(t)+Bu(t)-Bu_{n}(\delta_{n}(t)\rangle.
\end{eqnarray*}
as $A\dot u_{n}(t)+ Bu_{n}(\delta_{n}(t))\in
C_{n}(\theta_{n}(t))\subset C(\theta_{n}(t))$, the last inequality
becomes
\begin{eqnarray}\label{star}
\langle z,A\dot u(t)+Bu(t) \rangle &\leq&
\sigma(C(\theta_{n}(t)),z)+\langle z,A\dot u(t)- A\dot
u_{n}(t)+Bu(t)-Bu_{n}(\delta_{n}(t)\rangle.
\end{eqnarray}
From  the property of
{the} support function  {(see Lemma \ref{lemsup})},
we derive
\begin{equation*}
|\ \sigma(C(\theta_{n}(t)),z)-\sigma(C(t)),z)|\ \leq \|z
\|d_{H}(C(\theta_{n}(t)),C(t)).
\end{equation*}
Hence,
\begin{equation}\label{dstar}
\sigma(C(\theta_{n}(t)),z) \leq \sigma(C(t)),z)+|\
{v(\theta_{n}(t))-v(t)}|\
\end{equation}
Combining (\ref{star}) and (\ref{dstar}) we obtain
\begin{eqnarray}\label{integ}
\langle z,A\dot u(t)+Bu(t) \rangle &\leq& \sigma(C(t),z)+|\
 {v(\theta_{n}(t))-v(t)}|\ + \langle z,A\dot u(t)-
A\dot
u_{n}(t)\rangle+\nonumber\\
&&\langle z,Bu(t)-Bu_{n}(\delta_{n}(t)\rangle .
\end{eqnarray}
Itegrating (\ref{integ}) with
$\tau>0$ small, we obtain
\begin{eqnarray*}
\int\limits _{t-\tau}^{t+\tau} \langle z,A\dot u(s)+Bu(s) \rangle
ds &\leq& \int\limits _{t-\tau}^{t+\tau} \sigma(C(s),z)ds +
\int\limits _{t-\tau}^{t+\tau}\langle z,A\dot u(s)- A\dot
u_{n}(s)\rangle ds +\\
&& \int\limits _{t-\tau}^{t+\tau}\langle z,B u(s)-
Bu_{n}(\delta_{n}(s))\rangle ds+  {
\int_{0}^{T}|v(\theta_{n}(t))-v(t)|dt}.
\end{eqnarray*}
 {It is easily seen that
$\int_{0}^{T}|v(\theta_{n}(t))-v(t)|dt\rightarrow0$ as
$n\rightarrow\infty.$} Also the weak convergence of $u_{n}$ and
$\dot u_{n}$ to $u$ and $\dot u$ in $L^{2}([0,T];H)$ respectively
and the properties of $A$ and $B$ yield as $n\rightarrow\infty$
\begin{eqnarray*}
\int\limits _{t-\tau}^{t+\tau} \langle z,A\dot u(s)+Bu(s) \rangle
ds &\leq& \int\limits _{t-\tau}^{t+\tau} \sigma(C(s),z)ds.
\end{eqnarray*}
Dividing by $2\tau$ and letting $\tau$ tend to zero, the Lebesgue
differentiation theorem gives
\begin{eqnarray*}
 \langle z,A\dot u(t)+Bu(t) \rangle
&\leq&  \sigma(C(t),z).
\end{eqnarray*}
The latter inequality being true for all $z\in H$, we deduce
according to the closdeness and the convexity of $C(t)$ that $A\dot
u(t)+Bu(t) \in C(t)$, which means that (\ref{viab}) is proved.\\
Finally we show that $u(.)$ satisfies the differential
inclusion in (\ref{eq1.2}). By (\ref{eq4.4}) and the definition of
the normal cone, we have
\begin{equation}\label{eq4.10}
\langle -\dot u_{n}(t),v-A\dot u_{n}(t)-Bu_{n}(\delta_{n}(t))
\rangle \leq 0,\;\forall
 \; v\in C_{n}(\theta_{n}(t)),\mbox {
a.e. } t\in [0,T].
\end{equation}
We claim that for all $t\in [0,T]$ for which (\ref{eq4.10}) holds
and $v\in C_{n}(t)$ we have

\begin{equation}\label{eq4.11}
\langle -\dot u_{n}(t),v-A\dot u_{n}(t)-Bu_{n}(\delta_{n}(t))
\rangle \leq  {\epsilon_{n}(t)},
\end{equation}
with
{$\epsilon_{n}(t)=8M|v(\theta_{n}(t))-v(t)|$}.
\\ Indeed, by (\ref{eq4.2}) we have  {$ v\in C_{n}(t) \subset
C_{n}(\theta_{n}(t))+8|v(\theta_{n}(t))-v(t)|\mathbb{B}$}. So,
there exists $\tilde{v}\in C_{n}(\theta_{n}(t))$ with
 {$\|v-\tilde{v}\|\leq
8|v(\theta_{n}(t))-v(t)|.$}\\
By (\ref{eq4.10}), we obtain
\begin{eqnarray*}
\langle -\dot u_{n}(t),v-A\dot u_{n}(t)-Bu_{n}(\delta_{n}(t))
\rangle &=& \langle -\dot u_{n}(t),v-\tilde{v}+\tilde{v}-A\dot
u_{n}(t)-Bu_{n}(\delta_{n}(t)) \rangle
\\
   &= & \langle -\dot
u_{n}(t),v-\tilde{v} \rangle+\\
& &\langle -\dot
u_{n}(t),\tilde{v}-A\dot u_{n}(t)-Bu_{n}(\delta_{n}(t)) \rangle\\
   &\leq &  {8M|v(\theta_{n}(t))-v(t)|=\epsilon_{n}(t)},
\end{eqnarray*}
the claim (\ref{eq4.11}) follows. \\
Choose arbitrary $t_{0}\in [0,T]$ and $v_{0}\in
C(t_{0})$. Suppose that $\tau >0$ is such that for all most all $t\in
[0,T]$, there exists a unique projection
$$v(t)=\left\{
\begin{array}{l}
{\rm P}_{C(t)}(v_0)\in C(t),  \mbox{ if } t\in[t_{0}-\tau,t_{0}+\tau],\\
{\rm P}_{C(t)}(A \dot u(t)+Bu(t))=A \dot u(t)+Bu(t)\in C(t),  \mbox{ otherwise. }
\end{array}
\right.
$$
Hence, $v(.)$ is a $L^\infty([0,T];H)$
selection of the  {absolutely continuous}
set-valued map $C(.)$ on $[0,T].$ The boundedness
of $v(.)$ on the compact interval $[0,T]$ implies that $v(t)\in C_{n}(t)$ for all $n\in \mathbb{N}$ sufficiently
large. {Using (\ref{eq4.11}), we have}
\begin{equation}\label{eq4.12}
\int\limits _{0}^{T} \langle -\dot
u_{n}(t),v(t)-A\dot u_{n}(t)-Bu_{n}(\delta_{n}(t)) \rangle dt \leq
\int\limits _{0}^{T}
{\epsilon_{n}(t)}dt,
\end{equation}
or equivalently,
\begin{equation}\label{eq4.13}
\int\limits _{0}^{T} \langle \dot u_{n}(t),A\dot
u_{n}(t) \rangle dt +\int\limits _{0}^{T}
\langle \dot u_{n}(t),Bu_{n}(\delta_{n}(t)) \rangle dt +
\int\limits _{0}^{T} \langle \dot u_{n}(t),-v(t)
\rangle dt \leq  {\int\limits _{0}^{T}
\epsilon_{n}(t)}dt.
\end{equation}
From the properties of $A$, we note that the function $x(t)\mapsto
\displaystyle\int\limits _{0}^{T} \langle x(t),Ax(t) \rangle dt $ is convex and
weakly lower semicontinuous on $L^{2}([0,T];H).$ \\
So we have,
\begin{eqnarray}\label{eq4.14}
\int_{0}^{T}\langle  \dot u(t),A\dot u(t)\rangle
dt  \leq \liminf_{n\to \infty} \int\limits
_{0}^{T} \langle \dot u_{n}(t),A\dot
u_{n}(t)\rangle dt,
\end{eqnarray}
and
\begin{eqnarray}\label{eq4.15}
\int_{0}^{T}\langle  \dot u(t),-v(t)\rangle dt =
\lim_{n\to \infty} \int\limits _{0}^{T} \langle
\dot u_{n}(t),-v(t)\rangle dt.
\end{eqnarray}
On the other hand, we have $B=\nabla \varphi_B$, for the continuous
convex function $\varphi_B(x)=\frac{1}{2}\langle Bx,x\rangle$.
Therefore, the absolute continuity of $\varphi_B\circ u$ and
$\varphi_B\circ u_{n}$ gives
\begin{eqnarray}
  \int\limits _{0}^{T}\langle Bu(t),\dot u(t)\rangle \,dt &=&   \int\limits _{0}^{T} \frac{d}{dt} \varphi_B(u(t))dt
  = \varphi_B(u(T))-\varphi_B(u(0)) \nonumber\\
  &\leq&\liminf_{n\to \infty}\Big(\varphi_B(u_n(T))-\varphi_B(u_n(0))\Big) \nonumber\\
  &=&\liminf_{n\to\infty} \Big( \int\limits _{0}^{T} \frac{d}{dt} \varphi_B(u_n(t))\,dt \Big)\nonumber\\
  &=& \liminf_{n\to \infty}\int\limits _{0}^{T} \langle Bu_{n}(t),\dot u_{n}(t)\rangle \,dt, \label{eq4.16}
\end{eqnarray}
where the inequality is due to the weak lower semicontinuity of
$\varphi_B$ on $H$ and to the fact that $u_n(T) \rightharpoonup u(T)$ weakly in
$H$ as $n\to \infty$. \\
Since,
$$
 \int\limits _{0}^{T} \Big\vert \langle Bu_{n}(t)-Bu_{n}(\delta_{n}(t)),\dot u_{n}(t)\rangle \Big\vert\,dt \leq M^{2}
   \|B\|\int_0^T|t-\delta_{n}(t)\vert\,dt,
 $$
we deduce that
\begin{equation}\label{eq4.17}
 \liminf_{n\to \infty}\int\limits _{0}^{T} \langle Bu_{n}(t),\dot u_{n}(t)\rangle \,dt
 =\liminf_{n\to \infty}\int\limits _{0}^{T} \langle Bu_{n}(\delta_{n}(t)),\dot{u}_{n}(t)\rangle\, dt.
\end{equation}
%
{Since} $\int_{0}^{T}\epsilon_{n}(t)dt\rightarrow0$ as
$n\rightarrow\infty,$ inequalities
(\ref{eq4.13}),(\ref{eq4.14}), (\ref{eq4.15}), (\ref{eq4.16}) and
(\ref{eq4.17}) yield as $n\rightarrow\infty$
\begin{equation}\label{eq4.18}
\int\limits _{0}^{T} \langle -\dot
u(t),v(t)-A\dot u(t)-Bu(t) \rangle dt \leq 0.
\end{equation}
Using the definition of $v(t)$ above, we get
\begin{equation}\label{eq4.18b}
\int\limits _{t_0-\tau}^{t_0+\tau} \langle -\dot
u(t),v(t)-A\dot u(t)-Bu(t) \rangle dt \leq 0.
\end{equation}
Dividing (\ref{eq4.18b})  by $2\tau$, letting $\tau$ goes to zero and using the
Lebesgue differentiation theorem, we get
\begin{equation}\label{eq4.19}
 \langle -\dot
u(t_{0}),v(t_{0})-A\dot u(t_{0})-B u(t_{0}) \rangle  \leq 0,
\end{equation}
or equivalently,
\begin{equation}\label{eq4.20}
 \langle -\dot
u(t_{0}),v_{0}-A\dot u(t_{0})-B u(t_{0}) \rangle  \leq 0,
\end{equation}
for all $t_{0}\in [0,T]$, outside a fixed set of measure zero
$\{t^{n}_{i},i=0,1,...,n   ; n \in \mathbb{N}\}$, and all $v_{0}
\in C(t_{0})$. This means that $u(.)$ is a solution of the
inclusion (\ref{eq1.2}).
\vskip 2mm\noindent
 \textbf{Step 5}.Uniqueness of the solution. Suppose that $(u_{1},u_{2})$ are two
 solutions satisfying (\ref{eq1.2}) such that $u_{1}(0)=u_{2}(0)=u_{0}.$
 Then for almost every $t\in [0,T],$ we have  for
 $i=1,2$
\begin{equation}\label{eq4.19b}
\langle -\dot u_{i}(t),v-A\dot u_{i}(t)-B u_{i}(t) \rangle \leq 0,
\mbox { for all}
 \; v\in C(t).
\end{equation}
Using the fact that $A\dot u_{i}(t)+B u_{i}(t)\in C(t)$ a.e., we
obtain, for a.e. $t\in [0,T],$

\begin{equation*}
\left\{
\begin{array}{c}
 \langle \dot u_{1}(t),A\dot u_{1}(t)+B u_{1}(t)-A\dot u_{2}(t)-B u_{2}(t)\rangle \leq 0, \\
  \langle -\dot u_{2}(t),A\dot u_{1}(t)+Bu_{1}(t)-A\dot u_{2}(t)-B u_{2}(t) \rangle \leq
  0.
\end{array}
\right.
\end{equation*}
By adding the last two inequalities, we get
\begin{equation*}
  \Big\langle \dot u_{1}(t)-\dot u_{2}(t),A\big( \dot u_{1}(t)-\dot u_{2}(t)\big)+B\big( u_{1}(t)- u_{2}(t)\big)\Big\rangle \leq 0.
\end{equation*}
Since $A$ is coercive, we obtain
\begin{equation*}
 \beta \|\dot u_{1}(t)-\dot u_{2}(t)\|^{2}\leq \|B\| \|\dot u_{1}(t)-\dot u_{2}(t)\|\|u_{1}(t)- u_{2}(t)\|.
\end{equation*}
As $u_{1}(0)=u_{2}(0)=u_{0}$, we get
\begin{equation*}
 \|\dot u_{1}(t)-\dot u_{2}(t)\|\leq \frac{\|B\|}{ \beta}\int\limits _{0}^{t} \| \dot u_{1}(\tau)- \dot u_{2}(\tau)\|d\tau,
\end{equation*}
 which means by Gronwall's inequality that $\dot u_{1}(t)=\dot u_{2}(t)$ for a.e. $t\in
[0,T].$  Therefore $ u_{1}(t)=u_{2}(t)$ for all $t\in [0,T].$
 The proof of Theorem \ref{theo1} is thereby completed.
\end{proof}

\section{Application to quasistatic frictional contact problem  }
As an application of the sweeping process problem (\ref{eq1.2}), we consider the following evolution variational inequality
\begin{eqnarray}\label{eq1.3}
\left\{
\begin{array}{l}
  \text{Find} \; u:[0,T]\longrightarrow H   \mbox{ such that } \; \dot u(t)=\frac{du(t)}{dt} \in \mathcal{K}\; \text {a.e.} \; t\in [0,T]\;\text {and } \hspace{30mm}\\
  \\
 a\big(\dot u(t),v-\dot u(t)\big)+b\big(u(t),v-\dot u(t)\big)+j(v)-j(\dot u(t))\geq \langle f(t),v-\dot u(t)\rangle,\;\forall
 \; v\in \mathcal{K}.\\ \\
  u(0)=u_{0}\in H,
\end{array}
\right.
\end{eqnarray}
Assume that the following assumptions are satisfied:\\
\\ \textit{$(\mathcal {VI}_{1})$} $ \mathcal{K}\subset H$ is a nonempty,
closed and convex cone (hence $0\in \mathcal{K}).$
\\ \textit{$(\mathcal {VI}_{2})$} $a(\cdot,\cdot)$, $b(\cdot,\cdot):H\times H\to\R$ are two real
continuous bilinear and symmetric forms satisfying  for all $u\in H$
$$
a(u,u)\geq \alpha_{0} \|u\|^{2},
$$
for some positive constant $\alpha_{0} >0$ and $ b(u,u)\geq 0.$
\\ \textit{$(\mathcal {VI}_{3})$} $j:\mathcal{K} \longrightarrow \mathbb{R}$
is a convex, positively homogeneous of degree $1$ (i.e. $j(\lambda x)=\lambda j(x),\;\forall \lambda >0$) and Lipschitz continuous with $j(0)=0.$
\\ \textit{$(\mathcal {VI}_{4})$} $f\in W^{1,1}([0,T];H)$ with $b(u_0,v)+j(v)\geq \langle f(0),v \rangle,\; \text {for all }
 \; v\in \mathcal{K}$.
 \vskip 2mm\noindent
 \begin{remark}\label{rem2}\normalfont
{ The compatibility condition on the initial data
$$b(u_0,v)+j(v)\geq \langle f(0),v \rangle, \; \forall v\in \mathcal{K},$$
ensures that initially the state is in equilibrium and that $Bu_0\in C(0)$ (see (\ref{comp})).
}
\end{remark}
\vskip 1mm
The evolution variational inequality  (\ref{eq1.3}) is of great interest in the modeling of the quasistatic frictional contact problems (see
\cite{DL,GLT,SM}). In a mechanical language, the bilinear form $a(\cdot,\cdot)$ represents the viscosity term, the bilinear form $b(\cdot,\cdot)$ represents the elasticity term, the functional $j$ represents the friction functional of Tresca type.\\
For our purpose of motivation, the main concern is to prove that the variational inequality (\ref{eq1.3}) is of type
(\ref{eq1.2}). In other words, we will convert the quasistatic variational inequality (\ref{eq1.3}) to the
problem of finding a solution {of the} sweeping process (\ref{eq1.2}).\\
Let us first extend the function $j$ from $\mathcal{K}$ to the
whole space $H$ by introducing the functional $J: H\to
\R\cup\{+\infty\}$, $z\mapsto J(z)$
defined by
\begin{equation}\label{extend}
J(z)=\left\{
\begin{array}{c}
 j(z),\quad z\in \mathcal{K} ,\\
  +\infty,\quad z\notin \mathcal{K}.
\end{array}
\right.
\end{equation}
Since $\mathcal{K}$ is a nonempty, closed and convex cone, and $j$
is convex, positively homogeneous of degree $1$  and Lipschitz
continuous on $\mathcal{K}$, we deduce that the extended
functional $J:H \longrightarrow \mathbb{R}\cup
\{+\infty\}$ is proper, positively
homogeneous of degree $1$,
convex and lower semicontinuous with $J(0)=0.$\\
With this extension, (\ref{eq1.3}) is equivalent to
\begin{eqnarray}\label{eq3.1}
\left\{
\begin{array}{l}
  \text{Find} \; u:[0,T]\longrightarrow H   \mbox{ such that  for a.e. } t\in [0,T]\;\text {we have } \hspace{30mm}\\
  \\
a(\dot u(t),v-\dot u(t))+b(u(t),v-\dot u(t))+J(v)-J(\dot u(t))\geq \langle f(t),v-\dot u(t)\rangle\;\forall v\in H.\\ \\
  u(0)=u_{0}\in H,
\end{array}
\right.
\end{eqnarray}
Let $A$ and $B$ be the linear bounded and symmetric operators
associated respectively  to the bilinear forms $a(\cdot,\cdot)$ and $b(\cdot,\cdot)$, that is,
\begin{equation}\label{AB}
\langle Au,v\rangle=a(u,v) \mbox{ and } \langle Bu,v\rangle=b(u,v), \mbox{ for
all }u,v \in H.
\end{equation}
Using the definition of the subdifferential given in (\ref{subdiff}), we can rewrite (\ref{eq3.1}) in the following form
\begin{equation}\label{eq3.2}
\left\{
\begin{array}{c}
 f(t)-A\dot u(t)-Bu(t) \in \partial J(\dot u(t)) \quad\text{a.e.\;}t\in [0,T],\\
  u(0)=u_{0}\in H.\hspace{30mm}
\end{array}
\right.
\end{equation}
The following Proposition shows the equivalence between the variant of {the} sweeping process introduced in (\ref{eq1.2}) and the quasistatic variational inequality (\ref{eq3.1}).
\vskip 2mm\noindent
%
\begin{proposition}\label{prop1}
Assume that assumptions \textit{$(\mathcal {VI}_{1})$}-\textit{$(\mathcal
{VI}_{4})$} are satisfied.  The function $u:[0,T]\longrightarrow H$ is a
solution of  {\normalfont (\ref{eq1.3})} if and only if it is a solution of  the sweeping process
 {\normalfont(\ref{eq1.2})}, where $A$ and $B$ are the linear bounded and
symmetric operators associated with $a(\cdot,\cdot)$ and
$b(\cdot,\cdot)$, and
$C(t):=f(t)-\partial J(0),\;t\in [0,T]$ with $J$ defined in  {\normalfont(\ref{extend})}.
\end{proposition}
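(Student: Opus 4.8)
The plan is to reduce both formulations to one and the same normal-cone inclusion and to observe that they coincide. The decisive structural fact is that, because $J$ is proper, convex, lower semicontinuous, positively homogeneous of degree $1$ with $J(0)=0$, its Legendre--Fenchel conjugate is the indicator of the subdifferential at the origin, i.e. $J^*={\rm I}_{\partial J(0)}$. Indeed $J^*(x^*)=\sup_{x}(\langle x^*,x\rangle-J(x))$; if $x^*\in\partial J(0)$ then $\langle x^*,x\rangle\le J(x)$ for all $x$ and the supremum equals $0$ (attained at $x=0$), whereas if $x^*\notin\partial J(0)$ there is $x_0$ with $\langle x^*,x_0\rangle>J(x_0)$, and replacing $x_0$ by $\lambda x_0$ with $\lambda\to+\infty$ drives the supremum to $+\infty$ by positive homogeneity. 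Consequently $\partial J^*=\partial{\rm I}_{\partial J(0)}={\rm N}_{\partial J(0)}$.

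Next I would rewrite the variational inequality. By the extension \eqref{extend}, problem \eqref{eq1.3} is equivalent to \eqref{eq3.2}, namely $f(t)-A\dot u(t)-Bu(t)\in\partial J(\dot u(t))$. Applying the inversion rule $(\partial J)^{-1}=\partial J^*$ recalled in Section 2 together with the identity $\partial J^*={\rm N}_{\partial J(0)}$ just established yields the equivalent statement
$$\dot u(t)\in{\rm N}_{\partial J(0)}\big(f(t)-A\dot u(t)-Bu(t)\big)\quad\text{a.e. }t\in[0,T].$$

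I would then transform the sweeping-process side. With $C(t)=f(t)-\partial J(0)$, write $D:=\partial J(0)$ and $w:=A\dot u(t)+Bu(t)$. The reflection rule ${\rm N}_C(-x)=-{\rm N}_{-C}(x)$ of \eqref{normCo} (with $x=-w$) gives ${\rm N}_{f(t)-D}(w)=-{\rm N}_{D-f(t)}(-w)$, and the translation rule ${\rm N}_{C-z}(y)={\rm N}_C(y+z)$ (with $C=D$, $z=f(t)$, $y=-w$) gives ${\rm N}_{D-f(t)}(-w)={\rm N}_{D}(f(t)-w)$, so that
$${\rm N}_{C(t)}\big(A\dot u(t)+Bu(t)\big)=-{\rm N}_{\partial J(0)}\big(f(t)-A\dot u(t)-Bu(t)\big).$$
Hence the inclusion $-\dot u(t)\in{\rm N}_{C(t)}(A\dot u(t)+Bu(t))$ of \eqref{eq1.2} reads exactly $\dot u(t)\in{\rm N}_{\partial J(0)}(f(t)-A\dot u(t)-Bu(t))$, which is the inclusion obtained from the variational inequality; this establishes the equivalence of the two differential inclusions (the membership hypotheses attached to the identities in \eqref{normCo} are precisely the viability condition, and both normal cones are empty simultaneously when it fails). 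It remains to reconcile the side conditions: the normal-cone membership forces $A\dot u(t)+Bu(t)\in C(t)$, equivalently $f(t)-w\in\partial J(0)$, which holds automatically because every subgradient of a degree-one homogeneous $J$ lies in $\partial J(0)$ (since ${\rm dom}\,\partial J^*\subseteq{\rm dom}\,J^*=\partial J(0)$); the initial value $u(0)=u_0$ is common to both problems; and the compatibility hypothesis $(\mathcal{VI}_4)$, read for $v\in\mathcal{K}$ and trivially for $v\notin\mathcal{K}$, is exactly $f(0)-Bu_0\in\partial J(0)$, i.e. $Bu_0\in C(0)$, so that Theorem \ref{theo1} applies.

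I expect the main obstacle to be the sign bookkeeping in the normal-cone calculus, since $C(t)=f(t)-\partial J(0)$ reflects the set $\partial J(0)$: each reflection and translation must be tracked so that the two derived inclusions match exactly rather than up to a sign. The homogeneity argument for $J^*={\rm I}_{\partial J(0)}$ is the conceptual crux, but it is short; the rest follows directly from the conjugacy and normal-cone identities already recorded in Section 2.
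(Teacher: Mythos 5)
Your proposal is correct and follows essentially the same route as the paper: reduce (\ref{eq1.3}) to the subdifferential inclusion (\ref{eq3.2}), use the positive homogeneity of $J$ to identify $J^*={\rm I}_{\partial J(0)}$ (the paper states the dual fact $J=\sigma(\partial J(0),\cdot)$ and biconjugates), invert via $(\partial J)^{-1}=\partial J^*$, and convert with the translation/reflection identities (\ref{normCo}); your sign bookkeeping matches the paper's chain of equivalences. The only differences are cosmetic: you prove the conjugacy identity directly rather than citing it, and you fold in the discussion of $Bu_0\in C(0)$, which the paper defers to Remark \ref{rem2} and Corollary \ref{cor1}.
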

\vskip 1mm
\begin{proof} It is easy to see that $u$ is a solution of the variational inequality (\ref{eq1.3}) if and only if it is a solution of the differential inclusion (\ref{eq3.2}). \\
From the properties of the subdifferential of $\partial J$ and since $J(0)=0$, we deduce that the subset
\begin{equation}\label{setC}
C:=\partial J(0)=\{\xi\in H:\langle\xi,
v\rangle \leq J(v),\forall v\in H\},
\end{equation}
 is a closed convex
subset in $H$.\\
Since $J$ is positively homogeneous of degree $1$ with $J(0)=0$, from a standard result in convex analysis, we have
$$J(z)=\sigma(C,z)={\rm I}^*_C(z).$$
Hence,
$$\partial J(\cdot )=\partial {\rm I}^*_C(\cdot) \mbox{ and } J^*(\cdot)={\rm I}^{**}_C(\cdot)={\rm I}_C(\cdot).$$
On the other hand, we have
$$p\in \partial J(z) \Longleftrightarrow z\in \partial J^*(p).$$
Therefore,
\begin{equation}\label{equiv}
p\in \partial J(z) \Longleftrightarrow z\in \partial {\rm I}_C(p) \Longleftrightarrow z\in {\rm N}_C(p), \mbox{ with } C=\partial J(0).
\end{equation}
Applying (\ref{equiv}) to (\ref{eq3.2}) and using (\ref{normCo}), we get for $\text{a.e.\;}t\in [0,T]$,
\begin{align*}
   f(t)-A\dot u(t)-Bu(t) \in \partial J(\dot u(t))     & \Longleftrightarrow \dot u(t) \in
{\rm N}_{C}\big(f(t)-A\dot u(t)-Bu(t)\big)\\
& \Longleftrightarrow     \dot u(t) \in {\rm N}_{C-f(t)}(-A\dot u(t)-Bu(t))   \\
& \Longleftrightarrow -\dot u(t) \in {\rm N}_{C(t)}(A\dot u(t)+Bu(t)), \\
\end{align*}
  with $C(t):=f(t)-C=f(t)-\partial J(0)$.   \\
Hence, problem (\ref{eq3.2}) is equivalent to
\begin{equation}\label{eq3.3}
\left\{
\begin{array}{c}
 -\dot u(t) \in {\rm N}_{C(t)}(A\dot u(t)+Bu(t)) \quad\text{a.e.\;}t\in [0,T],\\
  u(0)=u_{0}\in H,\hspace{30mm}
\end{array}
\right.
\end{equation}
which is exactly of the form of the variant of the sweeping process introduced in (\ref{eq1.3}).
\end{proof}
\vskip 1mm
As a consequence of Theorem \ref{theo1}, we have the following existence and uniqueness result for the quasistatic variational inequality
 (\ref{eq1.3}).
\vskip 1mm
\begin{corollary}\label{cor1}
Assume that assumptions \textit{$(\mathcal {VI}_{1})$}-\textit{$(\mathcal
{VI}_{4})$} are satisfied. Then for each $u_0\in H$, the evolution variational inequality  {\normalfont(\ref{eq1.3})} has a unique solution $u$.
\end{corollary}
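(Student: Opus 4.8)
The plan is to obtain the corollary as a direct consequence of the two results already established: the equivalence of Proposition \ref{prop1} and the well-posedness of Theorem \ref{theo1}. Since Proposition \ref{prop1} shows that $u$ solves (\ref{eq1.3}) if and only if it solves the sweeping process (\ref{eq1.2}) with $A,B$ the operators associated to $a(\cdot,\cdot),b(\cdot,\cdot)$ and $C(t)=f(t)-\partial J(0)$, the two problems have identical solution sets. Thus it suffices to verify that this particular data $(A,B,C(\cdot),u_0)$ fulfills the hypotheses of Theorem \ref{theo1}; existence and uniqueness for (\ref{eq1.2}) then transfer verbatim to (\ref{eq1.3}).

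First I would check $(\mathcal{SP}_1)$. By $(\mathcal{VI}_2)$ the forms $a$ and $b$ are continuous, bilinear and symmetric, so by the definition (\ref{AB}) the associated operators $A$ and $B$ are linear, bounded and symmetric (the representation $B=\nabla\varphi_B$ also being available through Lemma \ref{lem1}(ii)). The coercivity $a(u,u)\geq\alpha_0\|u\|^2$ and the nonnegativity $b(u,u)\geq 0$ translate at once into $\langle Au,u\rangle\geq\alpha_0\|u\|^2$ and $\langle Bu,u\rangle\geq 0$, which is exactly $(\mathcal{SP}_1)$ with $\beta=\alpha_0$.

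Next I would verify $(\mathcal{SP}_2)$ for $C(t)=f(t)-\partial J(0)$. As noted in the proof of Proposition \ref{prop1}, the set $C=\partial J(0)$ is a nonempty closed convex subset of $H$: it is nonempty because $J$ is proper, convex, lower semicontinuous and positively homogeneous of degree $1$ with $J(0)=0$, so that $J=\sigma(C,\cdot)$ and $\partial J(0)\neq\emptyset$. Hence each $C(t)=f(t)-C$ is closed, convex and nonempty. For the absolute continuity I would use the translation invariance of the Hausdorff distance: taking $c'=c$ in the infimum defining $d_{f(s)-C}(f(t)-c)$ gives $d_H(C(t),C(s))=d_H\big(f(t)-C,f(s)-C\big)\leq\|f(t)-f(s)\|$. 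Since $f\in W^{1,1}([0,T];H)$ by $(\mathcal{VI}_4)$, the function $v(t):=\int_0^t\|\dot f(\tau)\|\,d\tau$ is nondecreasing, absolutely continuous, with $v(0)=0$ and $d_H(C(t),C(s))\leq v(t)-v(s)$ for $0\leq s\leq t\leq T$, which is precisely $(\mathcal{SP}_2)$.

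It then remains to check the compatibility condition $Bu_0\in C(0)$ required by Theorem \ref{theo1}. Since $C(0)=f(0)-\partial J(0)$, one has $Bu_0\in C(0)$ if and only if $f(0)-Bu_0\in\partial J(0)$, i.e. $\langle f(0)-Bu_0,v\rangle\leq J(v)$ for all $v\in H$; for $v\notin\mathcal{K}$ this is trivial as $J(v)=+\infty$, while for $v\in\mathcal{K}$ it reads $b(u_0,v)+j(v)\geq\langle f(0),v\rangle$, which is exactly the compatibility assumption of $(\mathcal{VI}_4)$ (cf. Remark \ref{rem2}). With all hypotheses checked, Theorem \ref{theo1} yields a unique Lipschitz solution of (\ref{eq1.2}), and Proposition \ref{prop1} carries both existence and uniqueness back to (\ref{eq1.3}). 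The genuine content of the argument lies entirely in the verification of $(\mathcal{SP}_2)$ — the nonemptiness of $\partial J(0)$ and the translation-invariance estimate for $d_H$ — while the remaining steps are a direct matching of hypotheses, the heavy lifting having already been done in Theorem \ref{theo1} and Proposition \ref{prop1}.
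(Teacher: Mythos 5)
Your proposal is correct and follows essentially the same route as the paper: reduce to the sweeping process via Proposition \ref{prop1}, identify $(\mathcal{VI}_2)$ with $(\mathcal{SP}_1)$, verify $(\mathcal{SP}_2)$ for $C(t)=f(t)-\partial J(0)$ with $v(t)=\int_0^t\|\dot f(\tau)\|\,d\tau$, and translate $Bu_0\in C(0)$ into the compatibility condition of $(\mathcal{VI}_4)$, exactly as in (\ref{comp}). Your added remarks on the nonemptiness of $\partial J(0)$ and the translation invariance of the Hausdorff distance are welcome elaborations but do not change the argument.
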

\vskip 1mm
\begin{proof} Let us check that all assumptions of Theorem \ref{theo1} are satisfied. It is clear that assumptions \textit{$(\mathcal {VI}_{2})$} are equivalent to
 \textit{$(\mathcal {SP}_{1})$}. Let us check now that  \textit{$(\mathcal {SP}_{2})$} is verified. For every $t\in [0,T]$, we have $C(t)=f(t)-C=f(t)-\partial J(0)$. It is clear that $C(t)$ is a closed and convex set of $H$. On the hand, we have,
{\begin{eqnarray}\label{comp}
 Bu_0\in C(0) &\Longleftrightarrow& f(0)-Bu_0\in C \nonumber\\
 &\Longleftrightarrow& f(0)-Bu_0\in \partial J(0) \nonumber\\
 &\Longleftrightarrow& \langle f(0),v\rangle \leq j(v)+b(u_0,v),\;\forall v\in \mathcal{K}.
 \end{eqnarray}}
 Let us show now that the set-valued map $t\mapsto C(t)$ moves in an absolute continuous way. In fact, for all $0\leq s \leq t \leq T$, we have
 \begin{eqnarray*}
d_{H}(C(t),C(s))&\leq& \big\|f(t)-f(s)\big\| \\
   &= & \Big\|\int\limits _s^t \dot f(\tau) d\tau\Big\|
   \\
   &\leq & \int\limits _s^t \| \dot f(\tau) \|d\tau\\
   &= &
{v(t)-v(s) \mbox{ with }} v(t):=\int\limits _0^t \| \dot f(\tau)
\|d\tau.
\end{eqnarray*}
which means that $C(.)$ varies in an
{absolutely} continuous way.\\
 Hence, all assumptions of Theorem \ref{theo1} are satisfied. The existence and uniqueness of a solution to problem (\ref{eq1.3})  is simply a consequence of Theorem \ref{theo1}.
\end{proof}
\vskip 1mm
\begin{example} \normalfont{\bf Quasistatic frictional contact problem involving viscoelastic materials with short memory \cite{SM}.} Let $\Omega \subset {\R}^2$ be the section of a tube with infinity length  $\Omega \times ]-\infty,
+\infty[$ (see Figure \ref{fig1}). We
assume that $\Omega$ is an open bounded connected set with a
regular boundary $\Gamma=\partial \Omega$, ($\Gamma$ is a
one-dimensional manifold of class $C^m\; (\:m \geq 1\:)$ and
$\Omega$ is located on one side of $\partial \Omega$). We suppose
that $\Gamma$ is composed of three parts $\Gamma
=\overline{\Gamma}_1\cup \overline{\Gamma}_2\cup
\overline{\Gamma}_3$, with $\Gamma_i$ three open subsets for
$i=1,2,3$ and meas$(\Gamma_1)>0$. In this case the well-known
Korn's inequality is satisfied. We assume that the cylinder is
clamped on $\Gamma_1$ and in contact with a rigid foundation on
$\Gamma_3$. The cylinder deforms under the action of a surface
density force $f_0$ on $\Omega$ acting in the
axle-direction and traction forces of density $f_2$ on $\Gamma_2$ (for more details about the mathematical modeling of the antiplane shear, we refer to \cite{SM}, Chapter 8).
For simplification, we will omit the dependence of functions with respect to the space variable $x\in \Omega\cup\Gamma$ and the time variable $t\in [0,T]$. Moreover, the dot represents the time derivative i.e. $\dot{u}=\frac{du}{dt}$.\\
The
displacement of the
cylinder is governed by the following quasistatic variational inequality:
\begin{eqnarray}\label{memb}
\left\{
\begin{array}{l}
  \text{Find} \; u:[0,T]\longrightarrow H   \mbox{ such that  for a.e. } t\in [0,T]\;\text {we have } \hspace{30mm}\\
  \\
a(\dot u(t),v-\dot u(t))+b(u(t),v-\dot u(t))+J(v)-J(\dot u(t))\geq \langle f(t),v-\dot u(t)\rangle\;\forall v\in H,\\ \\
  u(0)=u_{0}\in H,
\end{array}
\right.
\end{eqnarray}
where $H=\{v\in H^1(\Omega) : v=0
\mbox{ on } \Gamma_1   \}$, the bilinear forms:
$a(\cdot,\cdot),\;b(\cdot,\cdot)\;:\;H\times H\to
\R,\;(u,v)\mapsto a(u,v),\;b(u,v)$, the frictional functional
$J:H\to \R,\;v\mapsto J(v)$ and the function $f:[0,T]\to
H,\;t\mapsto f(t)$ are defined respectively by
\begin{subequations}
\label{bilin}
\begin{empheq}{align}
&  a(u,v)=\int_{\Omega}\eta\: \nabla u\cdot \nabla v\, dx  \label{bb}\\
  &b(u,v)=\int_{\Omega}\kappa\: \nabla u\cdot \nabla v\, dx \label{ba}\\
  &J(v)=\int_{\Gamma_3}g\vert v\vert d\Gamma.\\
  & \langle f(t),v\rangle=\int_{\Omega} f_0(t)v\,dx+\int_{\Gamma_2}f_2(t) v\,d\Gamma.
  \end{empheq}
\end{subequations}
\begin{figure}
\begin{center}
\includegraphics[width=3in]{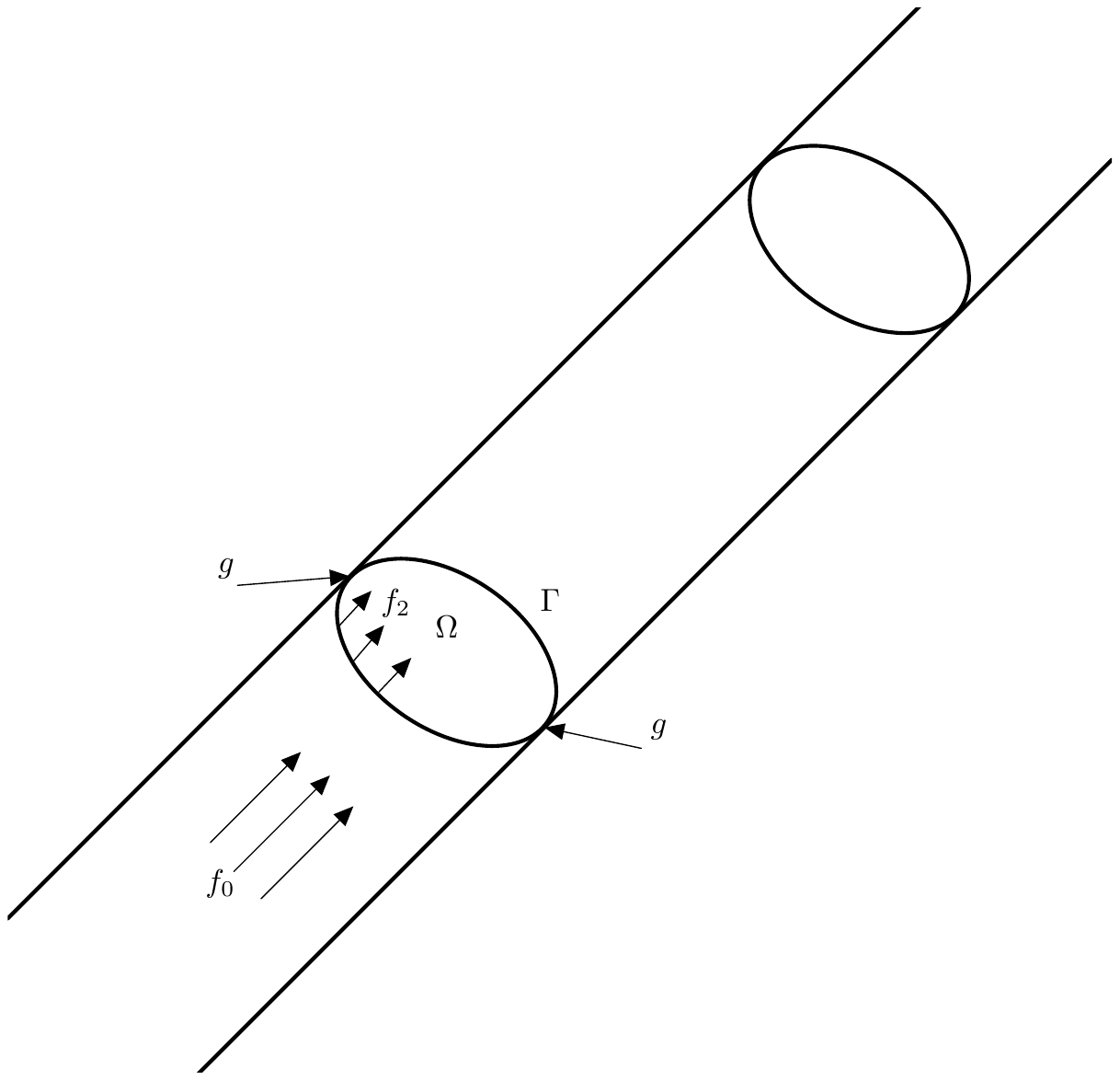}\includegraphics[width=2in]{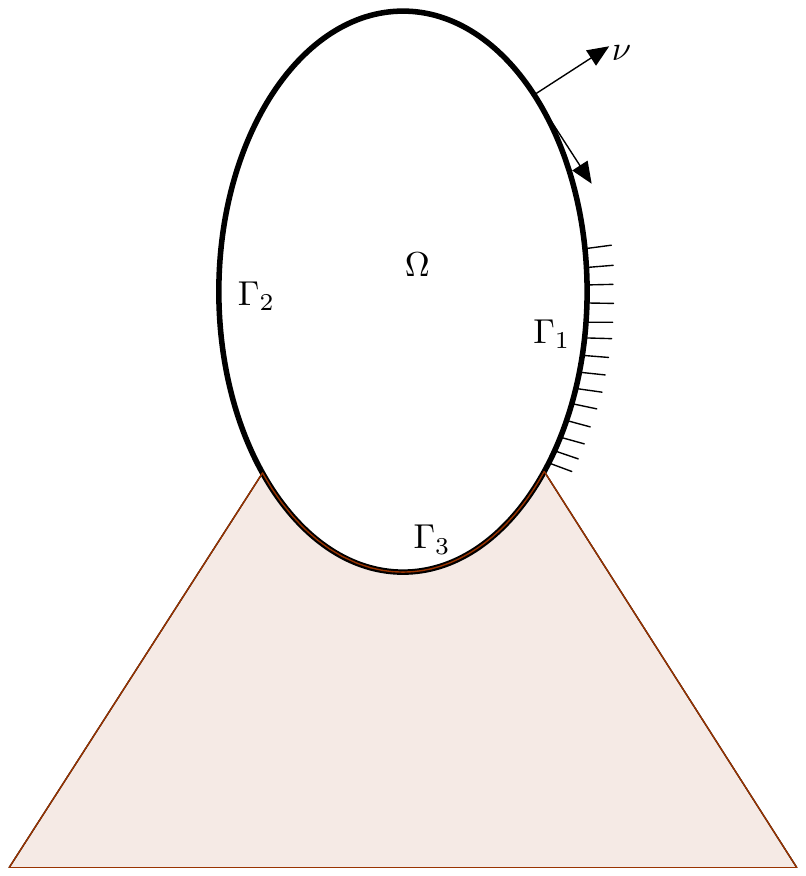}
\caption{Cross section of the cylinder in contact with a foundation.}
\label{fig1}
\end{center}
\end{figure}
\begin{remark}\normalfont
The variational formulation (\ref{memb}) derived from the following problem:\\
Find a displacement field $u: [0,T]\times \Omega \to \R,\;(t,x)\mapsto u(t,x)$ such that
\begin{subequations}
\label{stat}
\begin{empheq}[left={({\mathcal P})}\empheqlbrace]{align}
  &{\rm div}\Big(\eta (x)\nabla \dot{u}(t,x)+ \kappa(x)\nabla u(t,x) \Big)+f_0=0 &\mbox{ in } ]0,T[\times\Omega; \label{stata}\\
  & u(t,\cdot)=0&\mbox{ on }]0,T[\times\Gamma_1  \label{statb};&\\
  & \eta\partial_{\nu} \dot{u}+ \kappa\partial_{\nu}u=f_2 &\mbox{ on }]0,T[\times\Gamma_2 \label{statc};\\
  & \big\vert  \eta\partial_{\nu} \dot{u}+ \kappa\partial_{\nu}u \big\vert \leq g & \mbox{ on }]0,T[\times\Gamma_3 \label{statd};\\
  &  \eta\partial_{\nu} \dot{u}+ \kappa\partial_{\nu}u = -g\frac{\dot{u}}{\vert \dot{u}\vert}\mbox{ if } \dot{u}\neq 0, &\mbox{ on }]0,T[\times\Gamma_3 \label{state};\\
&  u(0,\cdot)=u_0(\cdot)& \mbox{in } \Omega.\label{statf}
  \end{empheq}
\end{subequations}
Here $\nu$ denotes the unit outer normal on the boundary $\Gamma$.
Equation (\ref{stata}) is the equilibrium state equation where a
viscoelastic constitutive law with short memory is assumed,
(\ref{statb}) is the Dirichlet boundary condition on $\Gamma_1$,
(\ref{statc}) is the traction boundary condition on $\Gamma_2$,
(\ref{statd})-(\ref{state}) are the frictional conditions and
(\ref{statf}) is the initial condition. For more details we refer
to \cite{SM} page 191. If the contact is modeled with a nonmonotone normal compliance condition and a unilateral constraint, then it is possible to study the problem in the framwork of variational-hemivariational inequalities (see e.g. the recent papers \cite{bs, hms} and references therein).
\end{remark}
\vskip 1mm\noindent
We suppose that the viscosity coefficient $\eta $, the Lam\'e coefficient $\kappa $, the forces $f_0$, $f_2$ and the friction function $g$
satisfy the following conditions
\begin{subequations}
\label{lame}
\begin{empheq}{align}
  & \kappa\in L^{\infty}(\Omega) \label{la}\\
  & \eta\in  L^{\infty}(\Omega) \mbox{ with } \eta (x)\geq \eta ^* \mbox{ a.e. } x\in \Omega \mbox{ (for some } \eta ^*>0).\label{lb}\\
  &f_0\in
{W^{1,1}}([0,T];L^2(\Omega)),\;f_2\in
{W^{1,1}}([0,T];L^2(\Gamma_2))\label{lc}\\
  & g(x)\geq 0 \mbox{ a.e. } x\in \Gamma_3 \mbox{ and } g\in L^2(\Gamma_3).\label{ld}\\
  &
{b(u_0,v)}+\int_{\Gamma_3}g\vert v\vert\: d\Gamma \geq
\int_{\Omega} f_0(0)v\:dx+\int_{\Gamma_2}f_2(0) v\:
d\Gamma,\;\;\forall v\in H.\label{le}
  \end{empheq}
\end{subequations}
As a direct consequence of Corollary \ref{cor1}, we show that problem (\ref{memb}) is well-posed.
\vskip 1mm
\begin{corollary} Assume {\normalfont{(\ref{la})-(\ref{ld})}}. Then for each $u_0\in H$ satisfying {\normalfont (\ref{le})}, problem  {\normalfont (\ref{memb})-(\ref{bilin})} has a unique solution.
\end{corollary}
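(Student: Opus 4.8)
The plan is to verify that the data of problem (\ref{memb})-(\ref{bilin}) fulfil all the hypotheses $(\mathcal{VI}_1)$-$(\mathcal{VI}_4)$ of Corollary \ref{cor1} with the choice $\mathcal{K}=H$, and then to conclude by a direct application of that corollary. With $\mathcal{K}=H$ the frictional functional $j$ coincides with $J$ given in (\ref{bilin}), and (\ref{eq1.3}) reduces precisely to the form (\ref{memb}).

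First I would dispatch $(\mathcal{VI}_1)$, which is immediate since $H=\{v\in H^1(\Omega):v=0\text{ on }\Gamma_1\}$ is a closed linear subspace of $H^1(\Omega)$, hence in particular a nonempty closed convex cone. Next, for $(\mathcal{VI}_2)$, the bilinear forms $a$ and $b$ defined in (\ref{bb})-(\ref{ba}) are clearly symmetric, and their continuity on $H\times H$ follows at once from $\eta,\kappa\in L^{\infty}(\Omega)$ (see (\ref{la})-(\ref{lb})) together with the Cauchy-Schwarz inequality. The sign condition $b(u,u)=\int_{\Omega}\kappa\,|\nabla u|^2\,dx\geq 0$ holds because the Lam\'e coefficient $\kappa$ is nonnegative a.e. The coercivity of $a$ is the main analytic point: using (\ref{lb}) one gets $a(u,u)\geq \eta^*\|\nabla u\|_{L^2(\Omega)}^2$, and since $\text{meas}(\Gamma_1)>0$, Korn's (Poincar\'e-type) inequality guarantees that $\|\nabla u\|_{L^2(\Omega)}$ is an equivalent norm on $H$; hence $a(u,u)\geq \alpha_0\|u\|_H^2$ for some $\alpha_0>0$.

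For $(\mathcal{VI}_3)$ I would check that $J(v)=\int_{\Gamma_3}g\,|v|\,d\Gamma$ inherits convexity and positive homogeneity of degree $1$ from the map $v\mapsto |v|$ (using $g\geq 0$ a.e. from (\ref{ld})), and that $J(0)=0$. For the Lipschitz property I would estimate $|J(v)-J(w)|\leq \int_{\Gamma_3}g\,|v-w|\,d\Gamma\leq \|g\|_{L^2(\Gamma_3)}\,\|v-w\|_{L^2(\Gamma_3)}$ and then invoke the trace theorem to obtain $\|v-w\|_{L^2(\Gamma_3)}\leq c\,\|v-w\|_H$; this uses $g\in L^2(\Gamma_3)$ from (\ref{ld}). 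Finally, for $(\mathcal{VI}_4)$, I would show that the functional $v\mapsto \int_{\Omega}f_0(t)v\,dx+\int_{\Gamma_2}f_2(t)v\,d\Gamma$ is linear and bounded on $H$ (again through the trace theorem for the boundary term), so that the Riesz representation theorem produces a unique $f(t)\in H$; the hypotheses $f_0\in W^{1,1}([0,T];L^2(\Omega))$ and $f_2\in W^{1,1}([0,T];L^2(\Gamma_2))$ from (\ref{lc}) then transfer to yield $f\in W^{1,1}([0,T];H)$. The compatibility condition in $(\mathcal{VI}_4)$, namely $b(u_0,v)+J(v)\geq\langle f(0),v\rangle$ for all $v\in H$, is exactly (\ref{le}).

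With all four hypotheses verified, the conclusion follows immediately from Corollary \ref{cor1}, which delivers existence and uniqueness of a solution. The only genuinely non-routine ingredients are the two functional-analytic inputs---Korn's inequality for the coercivity of $a$, and the trace theorem for both the Lipschitz bound on $J$ and the construction of $f(t)\in H$---while the remaining verifications are elementary.
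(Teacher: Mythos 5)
Your proposal is correct and follows essentially the same route as the paper: verify hypotheses $(\mathcal{VI}_1)$--$(\mathcal{VI}_4)$ and invoke Corollary \ref{cor1}. You supply more detail than the paper does (the Poincar\'e-type inequality behind the coercivity of $a$, the trace theorem for $J$ and $f$, and the implicit use of $\kappa\geq 0$ for $b(u,u)\geq 0$), but the underlying argument is identical.
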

\vskip 1mm
\begin{proof}We have
$$\vert a(u,v)\vert \leq \Vert \eta\Vert_{\infty} \Vert u\Vert \Vert v\Vert \mbox{ and } \vert b(u,v)\vert \leq \Vert \kappa\Vert_{\infty} \Vert u\Vert \Vert v\Vert.$$
The coercivity of $a(\cdot,\cdot)$ follows from (\ref{lb})
$$a(v,v)\geq \eta ^* \Vert v\Vert^2,\;\forall v\in H.$$
Assumption (\ref{le}) implies the compatibility condition \textit{$(\mathcal {VI}_{4})$}.
All assumptions \textit{$(\mathcal {VI}_{1})$}-\textit{$(\mathcal
{VI}_{4})$}  are satisfied. The conclusion follows by Corollary \ref{cor1}.
\end{proof}
\vskip 5mm
\begin{remark} \normalfont
We note that the existence of a unique solution to  problem  {\normalfont (\ref{memb})-(\ref{bilin})} was obtained in
 \cite{SM} without the compatibility condition (\ref{le}). This condition was used in \cite{SM} for the the study of quasistatic frictional problems with
 elastic materials (see Section 9.3 page 184 and (11.37) page 208 in \cite{SM}). We note that the compatibility condition (\ref{le}) is necessary in many quasistatic problems, it guarantees that the initial state is in equilibrium otherwise the inertial terms $\ddot{u}(t)$ cannot be neglected and the problem is no longer quasistatic (it will be a dynamic of second-order).  For the implicit sweeping process studied in this paper, condition (\ref{le}) is equivalent to the viability condition $Bu_0\in C(0)$ (necessary to start the algorithm since outside this set the normal cone would be empty).
\end{remark}
\end{example}
\bibliographystyle{amsplain}

\end{document}